\def\frk{\frak}               % font for "Fraktur"
\def\Phi{{\frk n}}
\def\Phi{{\frk N}}
\def\opn#1#2{\def#1{\operatorname{#2}}} % to make operators
\opn\chara{char} \opn\length{\ell} \opn\pd{pd} \opn\rk{rk}
\opn\projdim{proj\,dim} \opn\injdim{inj\,dim} \opn\rank{rank}
\opn\depth{depth} \opn\grade{grade} \opn\height{height}
\opn\embdim{emb\,dim} \opn\codim{codim}
\opn\Tr{Tr} \opn\bigrank{big\,rank}
\opn\superheight{superheight}\opn\lcm{lcm}
\opn\trdeg{tr\,deg}%\emph{
\opn\reg{reg} \opn\lreg{lreg} \opn\ini{in} \opn\lpd{lpd}
\opn\size{size}\opn\bigsize{bigsize}
\opn\cosize{cosize}\opn\bigcosize{bigcosize}
\opn\sdepth{sdepth}\opn\sreg{sreg}
\opn\link{link}\opn\fdepth{fdepth}
\opn\index{index}
\opn\index{index}
\opn\indeg{indeg}
\opn\N{N}
\opn\SSC{SSC}
\opn\SC{SC}
\opn\conv{conv}
\opn\div{div} \opn\Div{Div} \opn\cl{cl} \opn\Cl{Cl}
\opn\Spec{Spec} \opn\Supp{Supp} \opn\supp{supp} \opn\Sing{Sing}
\opn\Ass{Ass} \opn\Min{Min}\opn\Mon{Mon} \opn\dstab{dstab} \opn\astab{astab}
\opn\Syz{Syz}
\opn\reg{reg}
\opn\Ann{Ann} \opn\Rad{Rad} \opn\Soc{Soc}
\opn\Im{Im} \opn\Ker{Ker} \opn\Coker{Coker} \opn\Am{Am}
\opn\Hom{Hom} \opn\Tor{Tor} \opn\Ext{Ext} \opn\End{End}
\opn\Aut{Aut} \opn\id{id}
\opn\nat{nat}
\opn\pff{pf}%   \pf exists already
\opn\Pf{Pf} \opn\GL{GL} \opn\SL{SL} \opn\mod{mod} \opn\ord{ord}
\opn\Gin{Gin} \opn\Hilb{Hilb}\opn\sort{sort}
\opn\initial{init}
\opn\ende{end}
\opn\height{height}
\opn\type{type}
\opn\aff{aff} \opn\con{conv} \opn\relint{relint} \opn\st{st}
\opn\lk{lk} \opn\cn{cn} \opn\core{core} \opn\vol{vol}
\opn\link{link} \opn\star{star}\opn\lex{lex}\opn\Mon{Mon}\opn\Min{Min}
\opn\gr{gr}
\def\pot#1#2{#1[\kern-0.28ex[#2]\kern-0.28ex]}
\opn\dirlim{\underrightarrow{\lim}}
\opn\inivlim{\underleftarrow{\lim}}
\let\to=\rightarrow
\def\Implies{\ifmmode\Longrightarrow \else
        \unskip${}\Longrightarrow{}$\ignorespaces\fi}
\def\implies{\ifmmode\Rightarrow \else
        \unskip${}\Rightarrow{}$\ignorespaces\fi}
\def\iff{\ifmmode\Longleftrightarrow \else
        \unskip${}\Longleftrightarrow{}$\ignorespaces\fi}
\newtheorem{Theorem}{Theorem}[section]
 \newtheorem{Lemma}[Theorem]{Lemma}
 \newtheorem{Corollary}[Theorem]{Corollary}
 \newtheorem{Proposition}[Theorem]{Proposition}
 \newtheorem{Example}[Theorem]{Example}
 \newtheorem{Definition}[Theorem]{Definition}
 \newtheorem*{Definition*}{Definition}
 \newtheorem*{Conjecture*}{Conjecture}
\let\epsilon\varepsilon
\let\kappa=\varkappa
\def\qed{\ifhmode\textqed\fi
      \ifmmode\ifinner\quad\qedsymbol\else\dispqed\fi\fi}
\def\textqed{\unskip\nobreak\penalty50
       \hskip2em\hbox{}\nobreak\hfil\qedsymbol
       \parfillskip=0pt \finalhyphendemerits=0}
\def\dispqed{\rlap{\qquad\qedsymbol}}
\opn\dis{dis}
\def\pnt{{\raise0.5mm\hbox{\large\bf.}}}
\opn\Lex{Lex}
\begin{document}

 \title{The chain algebra of a pure poset}
\author{Dancheng Lu}

\address{School  of Mathematical Sciences, Soochow University, 215006 Suzhou, P.R.China}
\email{ludancheng@suda.edu.cn}

 \begin{abstract} We extend the notion of  chain algebra, originally defined in  \cite{GN} for finite distributive lattices,  to that of finite pure posets. We show  this algebra is the Ehrhart ring of a (0,1)-polytope, termed the chain polytope, and characterize the indecomposability of this polytope. Furthermore, we prove the normality of the chain algebra, describe its canonical module, and extend one of main results from \cite{GN} by computing its Krull dimension. For width-2 pure posets, we determine the algebra's regularity and conditions for it to be Gorenstein or nearly Gorenstein.
 \end{abstract}

\subjclass[2010]{Primary 13A02, 05E40  Secondary 13H60.}
\keywords{chain algebra, chain polytope, chain semigroup, pure poset,  Canonical module, regularity, Gorenstein, nearly Gorenstein}

 \maketitle

\section{Introduction}
The field dedicated to studying toric rings defined by combinatorial objects is a vibrant area of research, fueled by the dynamic interplay among toric geometry, combinatorics, and commutative algebra. Illustrative examples of such algebras include edge rings of graphs \cite{V, MOT}, base rings of matroids or discrete polymatroids  \cite{B,W, HH2002,L}, toric rings and Ehrhart rings of lattice polytopes \cite{HKMM},  Hibi rings of posets \cite{Hibi2},  rings of stable set polytopes \cite{MOS}, and cycle algebras of matroids \cite{RS2018,RS2024}.

Recently, a novel class of toric rings, called the {\it chain algebra} of a finite distributive lattice, was introduced and studied in \cite{GN}. Let $L$ be a finite distributive lattice and $\mathbb{K}[C_L]$ denote its chain algebra. It was proved in \cite{GN} that the toric ideal of $\mathbb{K}[C_L]$ has a square-free initial ideal, which implies that it is a normal domain. Moreover, the Krull dimension of $\mathbb{K}[C_L]$ is equal to $|L| - \mathrm{rank}(L)$, where $\mathrm{rank}(L)$ is one less than the size of a maximal chain in $L$. Additionally, the conditions under which the toric ideal of $\mathbb{K}[C_L]$ is generated by quadratic binomials were characterized.

  These investigations and results  inspire the following questions:

 (1) Could we describe the canonical module of  a chain algebra?

 (2)  Could we relate a chain algebra to a polytope, given that polytopes are a powerful tool to study algebraic structures?

 (3) Could we define the concept of  chain algebra in a more general setting?

The third question arises from the recognition that the only essential condition in the definition of a chain algebra is that all maximal chains must have the same length, while finite distributive lattices represent a very special type of pure posets. A poset is called  {\it pure} if all its maximal chains have the same length. In this paper, we extend the concept of the chain algebra from finite distributive lattices to pure posets.  The formal definitions are given below.

\begin{Definition}\label{cp} \em   Let $P$  be a finite pure poset, and  let $R := \mathbb{K}[x\:\; x\in P]$ be the polynomial ring over a field $\mathbb{K}$. Each maximal chain of $P$ is then associated with a square-free monomial in $R$, which is the product of all the elements in this chain. The chain algebra of $P$, denoted by $\mathbb{K}[C_P]$, is defined as the $\mathbb{K}$-subalgebra of $R$ generated by  the  monomials associated with all the maximal chains of $P$.

Additionally, we use $C_P$ to denote the set of all functions $f$ from $P$ to $\mathbb{N}_0:=\{0,1,2,\cdots\}$ such that the monomial $U_f:=\prod_{x\in P}x^{f(x)}$ is an element of the chain algebra $\mathbb{K}[C_P]$. It is not difficult to see that $C_P$ is an affine sub-semigroup of $\mathbb{N}_0^P$. We call $C_P$ the {\it chain semigroup} of $P$. 
\end{Definition} Here and hereafter, for sets $A$ and $B$, the notation $A^B$ denotes the set of all maps from $B$ to $A$. Consequently, $\mathbb{N}_0^P$, equipped with a natural addition operation,  is isomorphic to $\mathbb{N}_0^{|P|}$, as semigroups. It is clear that  the chain algebra $\mathbb{K}[C_P]$ is the affine semigroup ring of $C_P$. The objective of this paper is to investigate the algebraic properties of the chain algebra and chain semigroup, with a particular focus on their relationship to the combinatorial characteristics of the poset $P$.

  Our first concern is to depict the functions within $C_P$. To this end, for every pure poset \(P\), we construct a sequence of bipartite graphs, and for an arbitrary function \(f: P \to \mathbb{N}_0\), we introduce the concept of the \( f \)-parallelization of these graphs, which generalizes the standard graph-theoretic parallelization. We prove that \(f \in C_P\) if and only if each graph in the \(f \)-parallelization admits a perfect matching. By applying the Marriage Lemma to this equivalence, we derive a system of linear inequalities that precisely characterizes the functions in \(C_P\), as detailed in Theorem~\ref{main1}. As a  consequence of this result, we show that $C_P$ is a normal affine semigroup and $\mathbb{K}[C_P]$ is a normal domain.

The chain polytope of a pure poset $P$, denoted as $D_P$, is defined as the convex hull in $\mathbb{R}^P$ of the characteristic functions of all maximal chains of $P$. We establish that $D_P$ has the integer decomposition property and prove in Theorem~\ref{ind} that $D_P$ is indecomposable if and only if $P$ cannot be written as the ordinal sum of two non-empty sub-posets.

According to \cite[Theorem 6.3.5]{BH}, the canonical module $\omega_{\mathbb{K}[C_P]}$ of $\mathbb{K}[C_P]$ is isomorphic to an ideal of $\mathbb{K}[C_P]$. As a $\mathbb{K}$-linear space, this ideal is spanned by an ideal of the semigroup $C_P$, which we denote by $K_P$. In Theorem~\ref{can}, we use a system of linear inequalities to characterize the functions in $K_P$.

As per \cite[Proposition 3.1]{HHO2018}, the dimension of the chain algebra $\mathbb{K}[C_P]$ equals the dimension of the $\mathbb{Q}$-linear space spanned by the functions in $C_P$. We show that this space is the solution space of a homogeneous linear system with a row staircase coefficient matrix. This enables us  in Theorem~\ref{krull} to derive a concise formula for the dimension of a chain algebra, generalizing previous results for finite distributive lattices. Significantly, the characterization from Theorem~\ref{can} is essential in the proof of this result.

For pure posets of width 2, we further investigate the algebraic properties of their chain algebras.  By decomposing these posets into ordinal sums of basic posets and anti-chains, the regularity of $\mathbb{K}[C_P]$ is computed, and conditions for $\mathbb{K}[C_P]$ to be Gorenstein or nearly Gorenstein are established. These results highlight the complexity of studying these properties in the general case.

The paper unfolds as follows. Section 2 surveys the background information. Section 3 analyzes the functions from \(C_P\) and proves \(\mathbb{K}[C_P]\) to be a normal domain. Section 4 demonstrates that \(D_P\) has the integer decomposition property and determines when \(D_P\) is an indecomposable polytope. Section 5 details the canonical module of \(\mathbb{K}[C_P]\), while Section 6 derives the Krull dimension formula for \(\mathbb{K}[C_P]\). The final section focuses on the chain algebras of pure posets with width 2.

\section{Preliminaries}
We use $\mathbb{K}$ to denote a field,   $\mathbb{Z}$ and $\mathbb{Q}$ to represent the ring of integers and the rational field, $\mathbb{N}$ to denote the set of positive integers, and $\mathbb{N}_0:=\mathbb{N}\cup\{0\}$ to represent the set of all natural numbers.
In this section, we collect  concepts and basic facts that are  crucial for this paper.

\subsection{Marriage Lemma and perfect matching}

Let $G$ be a simple graph with vertex set $V(G)$ and edge set $E(G)$. By definition, a {\it perfect matching}  of $G$ is a subset of $E(G)$ consisting of pairwise disjoint edges such that their union covers all vertices of $G$.

 A simple graph $G$ is called a {\it bipartite graph} (with partition $(V_1,V_2)$) if $V(G)$ has a decomposition $V(G)=V_1\sqcup V_2$ such that for any edge $e\in E(G)$, $|e\cap V_1|=|e\cap V_2|=1$.  When $G$ is a bipartite graph with  partition $(V_1, V_2)$, we equivalently define  a perfect matching of $G$ as a bijection $\phi$ from $V_1$ to $V_2$ such that each $a\in V_1$ is adjacent to  its image $\phi(a)\in V_2$. For a subset $S\subseteq V(G)$, the {\it neighborhood set} $N_G(S)$ of $S$  in $G$ is the set of vertices of $G$ that are adjacent to some vertex from $S$.  The following lemma, known as the the Marriage Lemma, has several versions. We adapt the version from \cite[Lemma 9.1.2]{HH}.

\begin{Lemma} \label{marriage} Let $G$  be a bipartite graph with partition $(V_1,V_2)$. Then $G$ has a perfect matching if and only if $|V_1|=|V_2|$ and $|S|\leq |N_G(S)|$ for all subset $S\subseteq V_1$.
\end{Lemma}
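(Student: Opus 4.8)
The plan is to prove the equivalence by first disposing of the easy (necessity) direction and then establishing sufficiency by induction on $n := |V_1|$. For necessity, suppose $G$ has a perfect matching, which by definition is a bijection $\phi\colon V_1 \to V_2$ with each $a$ adjacent to $\phi(a)$. Then $|V_1| = |V_2|$ immediately, and for any $S \subseteq V_1$ the images $\{\phi(a) : a \in S\}$ are $|S|$ distinct vertices all lying in $N_G(S)$, so $|N_G(S)| \geq |S|$. This half is trivial; the substance is the converse.

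For sufficiency, assume $|V_1| = |V_2| = n$ and that $|S| \leq |N_G(S)|$ for every $S \subseteq V_1$, and induct on $n$. When $n = 1$, Hall's condition forces the unique vertex of $V_1$ to have a neighbor, and matching it there finishes the base case. For the inductive step (so $n \geq 2$) I would split into two cases according to whether the condition is ever tight on a proper nonempty subset. In the first case, suppose $|N_G(S)| > |S|$ for every nonempty $S \subsetneq V_1$. Pick any $a \in V_1$ and a neighbor $b \in N_G(\{a\})$, and let $G'$ be the bipartite graph obtained by deleting both $a$ and $b$. For every $S \subseteq V_1 \setminus \{a\}$ one has $N_{G'}(S) \supseteq N_G(S) \setminus \{b\}$, hence $|N_{G'}(S)| \geq |N_G(S)| - 1 \geq |S|$ by the assumed strict inequality; thus $G'$ satisfies Hall's condition, has a perfect matching by induction, and adjoining the edge $\{a,b\}$ gives one for $G$.

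The remaining case is where the real difficulty lies: suppose some nonempty $A \subsetneq V_1$ satisfies $|N_G(A)| = |A|$. I would split $G$ along $A$ into the bipartite graph $G_1$ induced on $(A, N_G(A))$ and the bipartite graph $G_2$ induced on $(V_1 \setminus A, \, V_2 \setminus N_G(A))$. Since every neighbor of a vertex of $A$ already lies in $N_G(A)$, the graph $G_1$ inherits Hall's condition verbatim and, having fewer than $n$ left-vertices, acquires a perfect matching by induction. For $G_2$ the key computation is that, for $S \subseteq V_1 \setminus A$, its neighborhood is $N_G(S) \setminus N_G(A)$, whose size equals $|N_G(A \cup S)| - |N_G(A)| \geq |A \cup S| - |A| = |S|$, using Hall's condition on $A \cup S$ together with the disjointness $A \cap S = \varnothing$; hence $G_2$ also meets Hall's condition and, being smaller, has a perfect matching by induction. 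Gluing the two matchings yields a perfect matching of $G$. The main obstacle is exactly this last case: one must choose a tight subset $A$ and check that both pieces independently inherit Hall's condition, with the verification for $G_2$ resting on the displayed inequality above.
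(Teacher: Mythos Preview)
Your proof is correct and follows the classical inductive argument for Hall's Marriage Theorem: necessity is immediate, and sufficiency is handled by induction on $|V_1|$, splitting according to whether Hall's condition is ever tight on a proper nonempty subset. Both cases are handled cleanly, and the key inequality for $G_2$ in the tight case is verified correctly via $|N_G(A\cup S)| - |N_G(A)| \geq |A\cup S| - |A| = |S|$.

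As for comparison with the paper: the paper does not actually give a proof of this lemma. It is stated as a known result and attributed to \cite[Lemma 9.1.2]{HH} (Herzog--Hibi, \emph{Monomial Ideals}), so there is no in-paper argument to compare against. Your write-up supplies exactly the standard textbook proof one would find in such a reference, so it is entirely appropriate here.
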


\subsection{Posets and chain algebras} A partially ordered set (also known as a poset) is a set together with a binary relation $\leq$ that is reflexive, antisymmetric, and transitive. If $x\leq y$ and $x\neq y$, we write $x<y$.

Let $P$ be a finite poset. A {\it chain} of $P$ is a subposet of $P$ such that  every two elements in it are comparable. The {\it length} of a chain $C$ is $\ell(C):=|C|-1$. A chain of $P$ is {\it maximal} if it is not strictly contained in any other chain of $P$. Conversely, an {\it anti-chain} of $P$ is a subset of $P$ in which  every two distinct elements  are incomparable.

\begin{Definition} \em  A finite poset $P$ is called {\it pure} or {\it graded} of rank $r$ for some $r\in \mathbb{N}_0$ if every
maximal chain of $P$ has length $r$.
\end{Definition}

Let $x,y\in P$ with $x< y$. We say $y$ {\it covers} $x$ if there is no $z\in P$ such that $x<z<y$.
For a poset $P$,  a function $\rho: P \rightarrow  \mathbb{N}_0$ is called a {\it rank function} if $\rho$ satisfies the
following conditions:

(1) $\rho(m) = 0$ for every minimal element $m\in P$, and

(2) $\rho(y) = \rho(x) + 1$ for all  $x, y\in  P$ such that $y$ covers $a$.

Every finite pure poset admits a unique rank function.  In what follows, we always assume that $P$ is a finite pure poset of rank $r$ with a rank function $\rho$. For $0\leq i\leq r$, let $P_i$ denote the subset of $P$ consisting of all elements of rank $i$, i.e., $P_i:=\{a\in P\:\; \rho(a)=i\}$. For $i=0,\ldots,r-1$, we define $G_i(P)$ as the bipartite graph with  partition $(P_i,P_{i+1})$ in which for any $a\in P_i$ and $b\in P_{i+1}$, $a$ is adjacent to $b$ if and only if $b$ covers $a$.

  Let $P$ be a finite pure poset and let $R$  denote the polynomial ring $\mathbb{K}[x\:\; x\in P]$. To every maximal chain
$C: x_0<x_1<\cdots<x_r$ in $P$, we associate  a square-free monomial: $x_C:=x_0x_1\cdots x_r$ in $R$. By Definition~\ref{cp}, the  chain algebra $\mathbb{K}[C_P]$ is the $\mathbb{K}$-subalgebra of $R$ generated by  $x_C$ for all the maximal chains $C$ of $P$. Moreover, the chain semigroup $C_P$ has a unique Hilbert basis that consists of characteristic functions of  the maximal chains  of $P$. Here, given a semigroup  $S$, we say that $S$ is generated by a subset $A\subseteq S$ if every element of $S$ is an $\mathbb{N}_0$-linear combination of elements of $A$. Any minimal generating set of $S$ is called a {\it Hilbert basis} of $S$.

 Notice that for any $x\in P_i$, there is a maximal chain of the form: $x_0<x_1\cdots<x_{i-1}<x_i=x<x_{i+1}<\cdots<x_r.$ Consequently,  we conclude that for any $x\in P$,  there is at least one $f\in C_P$ such that $f(x)>0$.

\subsection{Chain polytopes and their algebras} A polytope $P\subset \mathbb{R}^d$ is the convex hull of finite vectors in $\mathbb{R}^d$. The {\it dimension} of a polytope is defined as the dimension of its affine hull. A lattice polytope is a polytope in which all the vertices have integer coordinates, and a (0,1)-polytope is a special type of lattice polytope where all the vertices have coordinates that are either 0 or 1. In our exploration, polytopes emerge as a valuable tool.

\begin{Definition}\em \label{D_P} Let $P$ be a finite pure poset. For each  maximal chain $C$ of $P$, the {\it characteristic function} $\chi_C$ is the function from $P$ to $\mathbb{R}$  such that $\chi_C(x)=1$ if $x\in C$ and $\chi_C(x)=0$ otherwise.   We then define {\it chain polytope} of $P$, denoted by $D_P$,  as the convex hull in   $\mathbb{R}^{P}$  of all these characteristic functions     $\chi_C$.
\end{Definition}

Let $D\subset \mathbb{R}^d$ be a lattice polytope.  Following \cite{HKMM}, the {\it Ehrhart ring} $A(D)$ of $D$ is defined by $$A(D):=\mathbb{K}[x_1^{a_1}\cdots x_d^{a_d}y^k\:\; (a_1,\ldots,a_d)\in kD\cap \mathbb{Z}^d, k\in \mathbb{N}_0].$$  Additionally,  the {\it toric ring} $\mathbb{K}[D]$ of $D$ is defined to be $$\mathbb{K}[D]:=\mathbb{K}[x_1^{a_1}\cdots x_d^{a_d}y\:\; (a_1,\ldots,a_d)\in D\cap \mathbb{Z}^d].$$
They are both  $\mathbb{K}$-subalgebras of the polynomial ring $\mathbb{K}[x_1,\ldots,x_d,y]$ over a field $\mathbb{K}$. If all vectors  in $D\cap \mathbb{Z}^d$ belongs to an affine  hyperplance, the  variable ``$y$" in  the definition of $\mathbb{K}[D]$ may be removed.   In general, one has $\mathbb{K}[D]\subseteq A(D)$, and  it is not difficult to see that $A(D)=\mathbb{K}[D]$ if and only if $D$ has the {\it integer decomposition property}. Following \cite[Definition 4.3]{HHO2018}, a lattice polytope $D$ is said to have the  integer decomposition property (IDP) if for all integers $k>0$ and all $\alpha\in kD\cap \mathbb{Z}^d$, there exist $\beta_1,\ldots \beta_k\in D\cap \mathbb{Z}^d$ such that $\alpha=\beta_1+\cdots+\beta_k$.

The following result is utilized in numerous literatures, such as \cite{RS2024}, without explicit mention. The special case of this result when $|f_1|=\cdots=|f_t| = 2$ was proved in \cite{OH}. Since we are unable to trace its original source, we explicitly state it here and provide a brief proof.

\begin{Lemma} \label{0-1} Let $f_1,\ldots, f_t$ be $(0,1)$-vectors in $\mathbb{R}^{d}$ such that $|f_1|=\cdots=|f_t|$. Then $$\mathrm{conv}(f_1,\ldots,f_t)\cap \mathbb{Z}^d= \{f_1,\ldots,f_t\}.$$ Here, $\mathrm{conv}(f_1,\ldots,f_t)$ denotes the convex hull of $f_1,\ldots,f_t$, and  for any $f\in \mathbb{R}^d$, $|f|$ denotes the number $f(1)+\cdots+f(d).$
\end{Lemma}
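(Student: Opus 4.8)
The plan is to prove the two inclusions separately. One direction is immediate: each $f_i$ is a $(0,1)$-vector, hence lies in $\mathbb{Z}^d$, and trivially belongs to the convex hull, so $\{f_1,\ldots,f_t\}\subseteq \mathrm{conv}(f_1,\ldots,f_t)\cap\mathbb{Z}^d$. The substance of the lemma is the reverse inclusion, and this is where I would concentrate the argument.

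To establish it, I would take an arbitrary lattice point $g\in\mathrm{conv}(f_1,\ldots,f_t)\cap\mathbb{Z}^d$ and write $g=\sum_{i=1}^t\lambda_i f_i$ with $\lambda_i\geq 0$ and $\sum_{i=1}^t\lambda_i=1$. Writing $m:=|f_1|=\cdots=|f_t|$, the first step is to note that $|g|=\sum_i\lambda_i|f_i|=m$, so $g$ has the same coordinate sum as every $f_i$. The second step is to observe that each coordinate $g(j)=\sum_i\lambda_i f_i(j)$ is a convex combination of numbers in $\{0,1\}$ and therefore lies in $[0,1]$; since $g$ is integral, this forces $g(j)\in\{0,1\}$. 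Hence $g$ is itself a $(0,1)$-vector with $|g|=m$.

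The decisive step is a comparison of supports. I would set $S:=\{i:\lambda_i>0\}$, which is non-empty since the $\lambda_i$ sum to $1$. For any coordinate $j$ with $g(j)=0$, the equation $\sum_i\lambda_i f_i(j)=0$ together with non-negativity of all terms forces $f_i(j)=0$ for every $i\in S$; equivalently $\supp(f_i)\subseteq\supp(g)$ for each $i\in S$. Because $|\supp(f_i)|=m=|\supp(g)|$, an inclusion of equal-cardinality finite sets must be an equality, so $\supp(f_i)=\supp(g)$ and thus $f_i=g$ for all $i\in S$. Picking any such $i$ yields $g=f_i\in\{f_1,\ldots,f_t\}$, as desired.

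I do not anticipate a genuine obstacle in this argument. The two places demanding care are the passage from $g(j)\in[0,1]$ to $g(j)\in\{0,1\}$, which is exactly where integrality of $g$ enters, and the final cardinality step, which is precisely where the hypothesis $|f_1|=\cdots=|f_t|$ is indispensable, since without a common coordinate sum the supports could differ and a lattice point in the hull need not be one of the $f_i$.
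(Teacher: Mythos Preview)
Your proof is correct and follows essentially the same approach as the paper's: write the lattice point as a convex combination, deduce it is a $(0,1)$-vector with the common coordinate sum $m$, and then use a support inclusion together with equality of cardinalities to identify it with some $f_i$. The only cosmetic difference is that the paper argues the inclusion $\supp(g)\subseteq\supp(f_i)$ (via ``$g(j)=1\Rightarrow f_i(j)=1$'') while you argue the reverse inclusion $\supp(f_i)\subseteq\supp(g)$ (via ``$g(j)=0\Rightarrow f_i(j)=0$''); either direction, combined with $|\supp(g)|=|\supp(f_i)|=m$, yields the conclusion.
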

\begin{proof} Let $f\in \mathrm{conv}(f_1,\ldots,f_t)\cap \mathbb{Z}^d$. We may write $f=a_1f_1+\cdots+a_tf_t$, where $a_1+\cdots+a_t=1$ with $a_i\geq 0$ for all $i$. From this expression, it follows that $0\leq f(i)\leq 1$ for all $1\leq i\leq d$ and so $f$ is a $(0,1)$-vector. Moreover, we have $|f|=a_1|f_1|+\cdots+a_t|f_t|=|f_i|$ for each $1\leq i\leq t$.
 Since $a_1+\cdots+a_t=1$, there exists at least one $1\leq i\leq t$ such that $a_i>0$. Without loss of generality, we assume that $a_1>0$. We now claim that if $f(i)=1$ then $f_1(i)=1$ for $i=1,\ldots,d$. In fact, if not, there is $1\leq j
\leq d$ such that $f(j)=1$ but $f_1(j)=0$. This implies $1=f(j)=a_2f_2(j)+\cdots+a_tf_t(j)\leq a_2+\cdots+a_t<1$. This  contradiction establishes the claim. Consequently, we have the inclusion $\{1\leq i\leq d\:\; f(i)=1\}\subseteq \{1\leq i\leq d\:\; f_1(i)=1\}$. On the other hand, it is clear that $|f|=|\{1\leq i\leq d\:\; f(i)=1\}|=|\{1\leq i\leq d\:\; f_1(i)=1\}|=|f_1|$. Hence, we conclude that $f=f_1$, completing the proof.
\end{proof}
 The chain polytope $D_P$ is a (0,1)-polytope. In view of Lemma~\ref{0-1}, the chain algebra $\mathbb{K}[C_P]$ coincides with the toric ring $\mathbb{K}[D_P]$.

 Let $A,B$ be subsets of $\mathbb{\mathbb{R}}^d$. Their {\it Minkowski sum}  is defined as $A+B:=\{x+y\:\; x\in A, y\in B\}$.
 Following \cite{HKMM},  a lattice polytope $D$  is called {\it indecomposable} provided that $D$ is not a singleton and  any representation of $D$  as  a Minkowski sum  $D=D_1+D_2$ of lattice polytopes requires that  either $D_1$ or $D_2$ is a singleton. Recall that the direct product of two polytopes $D\subset \mathbb{R}^d$ and $E\subset \mathbb{R}^e$ is defined by $D\times E:=\{(a,b)\:\; a\in D, b\in E\}\subset \mathbb{R}^{d+e}$.
 In the case when $D$ is a (0,1)-polytope,  \cite[Lemma 30]{HKMM} provides an equivalent condition for indecomposability: if $D$ is a non-singleton $(0,1)$-polytope, then $D$ is indecomposable precisely when  there do not exist  non-empty $(0,1)$-polytopes $D_1$ and $D_2$ such that $D=D_1\times D_2$.

\subsection{Nearly Gorenstein and pseudo-Gorenstein}

Let $R$ be a positively graded $\mathbb{K}$-algebra with maximal graded ideal $\mathfrak{m}$. For an $R$-module $M$, its {\it trace}, denoted $\mathrm{tr}_R(M)$, is the sum of the ideals $\phi(M)$ with $\phi\in \mathrm{Hom}_R(M,R)$.  Assume further that $R$  is a Cohen-Macaulay $\mathbb{K}$-algebra admitting a canonical module $w_R$.  Following \cite{HHS},  $R$ is called {\it nearly Gorenstein} if $\mathrm{tr}_R(w_R)\supseteq \mathfrak{m}$. Following \cite{EHHM}, $R$ is called {\it pseudo-Gorenstein} if $\dim_{\mathbb{K}}(\omega_R)_a=1$, where $a:=\min\{i\:\; (\omega_R)_i\neq 0\}$. Both nearly Gorenstein and pseudo-Gorenstein properties serve as generalizations of the notion of Gorensteinness. Nearly Gorensteinness was studied extensively in many papers, such as \cite{HKMM,HMP,HS,Mi}.

\subsection{Homogenous affine semigroups} \label{affine} Recall from \cite{BH} that an affine semigroup  $S$ is a finitely generated sub-semigroup   of $\mathbb{Z}^d$ for some $d>0$ with $\mathbf{0}\in S$. We only consider positive affine semigroups that are contained in $\mathbb{N}_0^d$.
\begin{Definition} \em Let $S\subset \mathbb{N}_0^d$ be an affine semigroup. It is said to be {\it normal} if for $\alpha,\beta,\gamma\in S$ and $n>0$ satisfying $n\alpha=n\beta+\gamma$,  there is $\gamma'\in S$ such that $\gamma=n\gamma'$.
\end{Definition}

A domain is said to be {\it normal} if it is integral closed in its fractional field. For an affine semigroup  $S\subseteq \mathbb{N}_0^d$, the {\it (affine) semigroup ring} $\mathbb{K}[S]$ is defined to be  the $\mathbb{K}$-subalgebra of the polynomial ring $\mathbb{K}[x_1,\ldots,x_d]$ generated by monomials $X^{\alpha}:=x_1^{\alpha_1}\cdots x_d^{\alpha_q}$ with $\alpha=(\alpha_1,\ldots,\alpha_d)\in S$. It is known from \cite{BH} that  $\mathbb{K}[S]$ is a normal domain  if and only if $S$ is a normal affine semigroup, and in this case, $\mathbb{K}[S]$ is Cohen-Macaulay.

Let $S\subset \mathbb{N}_0^d$ be a normal affine semigroup.  An {\it ideal} of $S$ is a subset $T$ of $S$ such that $a+b\in T$ for any $a\in T$ and $b\in S$. Set $$K_S:=\{\alpha\in S\:\; \forall \beta\in S, \exists n>0 \mbox{ and } \gamma\in S \mbox{ such that } n\alpha=\beta+\gamma\}.$$ Then $K_S$ is an ideal of $S$. Let  $\langle K_S\rangle_{\mathbb{K}}$ denote the $\mathbb{K}$-linear subspace of  $\mathbb{K}[S]$ spanned by $X^\alpha$ with $\alpha\in K_S$. Then it is easy to check that  $\langle K_S\rangle_{\mathbb{K}}$ is an ideal of $\mathbb{K}[S]$.  The following lemma is due to  \cite[(21)]{S}.

\begin{Lemma} \label{can0} Let $S\subset \mathbb{N}_0^d$ be a normal affine semigroup. Then the canonical module $\omega_R$ of the semigroup ring $R:=\mathbb{K}[S]$ is isomorphic to the ideal  $\langle K_S\rangle_{\mathbb{K}}$ of $R$.
\end{Lemma}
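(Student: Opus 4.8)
The plan is to reduce the statement to the classical geometric description of the canonical module of a normal affine semigroup ring and then translate that description into the purely semigroup-theoretic language of $K_S$. Write $C:=\mathbb{R}_{\geq 0}S$ for the real cone spanned by $S$ and $G:=\mathbb{Z}S$ for the group it generates, and let $g_1,\dots,g_m$ be the Hilbert basis of $S$, so that $C=\operatorname{cone}(g_1,\dots,g_m)$. By the standard description of the canonical module of a normal affine semigroup ring (see \cite[Theorem 6.3.5]{BH} and \cite[(21)]{S}), $\omega_R$ is isomorphic to the ideal of $R$ whose $\mathbb{K}$-basis is $\{X^\alpha : \alpha\in \operatorname{relint}(C)\cap G\}$, where the relative interior is taken inside the linear span of $C$. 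Since $S$ is normal we have $S=C\cap G$, whence $\operatorname{relint}(C)\cap G=\operatorname{relint}(C)\cap S$. Thus, because the ideal with $\mathbb{K}$-basis $\{X^\alpha:\alpha\in K_S\}$ is precisely $\langle K_S\rangle_{\mathbb{K}}$, it suffices to prove the set equality
\[
K_S=\operatorname{relint}(C)\cap S. \qquad (\ast)
\]

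For $(\ast)$ I would use the elementary fact that the relative interior of a finitely generated cone is exactly the set of its strictly positive combinations,
\[
\operatorname{relint}(C)=\Bigl\{\textstyle\sum_{i=1}^m \lambda_i g_i : \lambda_i>0 \text{ for all } i\Bigr\}.
\]
To show $\operatorname{relint}(C)\cap S\subseteq K_S$, take $\alpha=\sum_i \lambda_i g_i$ with all $\lambda_i>0$, and let $\beta\in S$ be arbitrary, say $\beta=\sum_i b_i g_i$ with $b_i\in\mathbb{N}_0$. Choosing $n$ large enough that $n\lambda_i>b_i$ for every $i$, the element $n\alpha-\beta=\sum_i(n\lambda_i-b_i)g_i$ lies in $C$, and it also lies in $G$; by normality $n\alpha-\beta\in S$. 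Setting $\gamma:=n\alpha-\beta$ gives $n\alpha=\beta+\gamma$, so $\alpha\in K_S$. Conversely, to show $K_S\subseteq \operatorname{relint}(C)\cap S$, let $\alpha\in K_S$ and apply its defining property to the special element $\beta:=g_1+\cdots+g_m$: there are $n>0$ and $\gamma\in S$ with $n\alpha=\beta+\gamma$. Writing $\gamma=\sum_i a_i g_i$ with $a_i\in\mathbb{N}_0$ yields $\alpha=\sum_i\frac{1+a_i}{n}\,g_i$, a strictly positive combination of all the generators, hence $\alpha\in\operatorname{relint}(C)$; since $\alpha\in S$, this direction is complete.

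The only genuinely non-routine ingredient is the positive-combination characterization of the relative interior, so I would either cite it or prove it in two lines: every strictly positive combination can be perturbed slightly in any direction of $\operatorname{span}(C)$ while remaining in $C$, and conversely, given $\alpha\in\operatorname{relint}(C)$, pushing slightly past $\alpha$ away from the interior point $b:=\sum_i g_i$ produces $(1+\epsilon)\alpha-\epsilon b\in C$, whose nonnegative expansion combined with $b$ exhibits an all-positive representation of $\alpha$. The other point deserving care is that normality is invoked precisely in the step $n\alpha-\beta\in C\cap G\Rightarrow n\alpha-\beta\in S$; this is where the hypothesis $S=C\cap G$ is indispensable, and it is exactly what makes the identification of $K_S$ with the relative-interior lattice points—and hence with the canonical module—valid.
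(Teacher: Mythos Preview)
The paper does not actually prove this lemma; it merely attributes the result to Stanley \cite[(21)]{S} and moves on. Your argument is correct and supplies what the paper omits: you invoke the Danilov--Stanley description of $\omega_R$ as the ideal spanned by monomials indexed by $\operatorname{relint}(C)\cap G$, then establish the set equality $K_S=\operatorname{relint}(C)\cap S$ directly from the definition of $K_S$. Both inclusions are handled cleanly, and the use of normality in the step $n\alpha-\beta\in C\cap G\Rightarrow n\alpha-\beta\in S$ is exactly right. The short justification of the positive-combination characterization of the relative interior is adequate for this level of detail. In short, your proof is a faithful reconstruction of the classical argument behind the citation, whereas the paper simply quotes the result.
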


The only affine semigroup we study in this paper is $C_P$, where $P$ is a finite pure poset. As shown in Section 3,  $C_P$ is a normal affine semigroup.  We will use $K_P$ to denote  $K_{S}$ if $S=C_P$.
\vspace{2mm}

A nonzero element \( h \in H \) is called a \textit{minimal generator} if whenever it is written as \( h = h_1 + h_2 \) with \( h_1, h_2 \in H \), at least one of \( h_1 \) or \( h_2 \) must be zero.

  Recall that an affine semigroup \(H \subseteq \mathbb{Z}^d\) is termed {\it homogenous} when all of its minimal generators reside within an affine hyperplane. In particular, if there exists a vector \(c \in \mathbb{Q}^d\) such that \((c, h) = 1\) for every minimal generator \(h\) of \(H\), then $H$ is homogenous. Here, \((c, h)\) represents the inner-product \(c(1)h(1)+\cdots + c(d)h(d)\) of \(c\) and \(h\).

In this context, the semigroup ring \(\mathbb{K}[H]\) is endowed with a standard grading. This is achieved by assigning  degree one to all the monomials that correspond to the minimal generators of \(H\). Subsequently, we refer to \(\mathbb{K}[H]\) as a homogeneous (or standard graded) semigroup ring.

If \(P\) is a pure poset of rank \(r\), it is readily apparent that \(C_P\) is homogeneous. In this case, the vector \(c\) is defined as \(c(p)=\frac{1}{r + 1}\) for all \(p\in P\).

For all \(i\geq0\), we define \(H_i:=\{a\in H\mid (c,a)=i\}\). Then, \(H\) can be presented as the disjoint union \(H=\bigsqcup_{i\geq0}H_i\), where \(H_0 = \{\mathbf{0}\}\). Furthermore, for all \(i\geq1\), \(H_i\) can be written as the sum \(H_i=H_1+\cdots + H_1\), where $H_1$ repeats $i$ times.  When \(\alpha\in H_i\), we say that \(\alpha\) has degree \(i\) and denote it as \(\deg(\alpha)=i\).

Returning our focus to chain semigroups, \(C_P\) is a homogeneous affine semigroup, and \((C_P)_1\) acts as the unique Hilbert basis of \(C_P\).

The $a$-invariant  of a standard graded $\mathbb{K}$-algebra $R$, denoted by $a(R)$,  is defined to be the degree of the Hilbert series  of $R$,
see \cite[Def 4.4.4]{BH}. It is known from \cite{GW} that if $R$ is a Cohen-Macaulay algebra admitting a canonical module $\omega_R$, then  $a(R)=-\min\{i\:\; (\omega_{R})_i\neq 0\}$. Hence, if $P$ is a finite pure poset, then $a(\mathbb{K}[C_P])$ is equal to $-\min\{i\:\; (K_P)_i\neq \emptyset\}.$

\subsection{Ordinal sum and Segre product} \label{ordinal}
For $\mathbb{N}_0$-graded $\mathbb{K}$-algebras  $S$ and $T$,  their Segre product $S\#T$ is defined as the $\mathbb{N}_0$-graded $\mathbb{K}$-algebra: $R:=\bigoplus_{i\geq 0} S_i\otimes_{\mathbb{K}} T_i.$  If both $T$ and $S$ are Cohen-Macaulay positively graded $\mathbb{K}$-algebras  admitting canonical modules, then $a(R)=\min\{a(T),a(S)\}$ by \cite[Proposition 2.2]{HMP}.

 Given disjoint posets $Q_1$ and $Q_2$, the ordinal sum $Q_1\oplus Q_2$ of $Q_1$ and $Q_2$ is defined to be the poset with the underlying set $Q_1\sqcup Q_2$, where, for any $x_1,x_2\in Q_1\sqcup Q_2$, $x_1\leq x_2$ if either $x_i\in Q_i$ for $i=1,2$, or $\{x_1,x_2\}\subseteq Q_i$ for some $i\in \{1,2\}$ and $x_1\leq x_2$ in the poset $Q_i$.

  If both $Q_1$ and $Q_2$ are pure posets, then  the ordinal sum $Q_1\oplus Q_2$ is also a pure poset.  Furthermore, the chain algebra $\mathbb{K}[C_{Q_1\oplus Q_2}]$  is isomorphic to the Segre product $\mathbb{K}[C_{Q_1}]\#\mathbb{K}[C_{Q_2}]$.

\section{Chain semigroup $C_P$ and  its Normality}

In this section,  we describe the functions in $C_P$ through a combination of  inequalities and equalities. Building upon this description,  we show that the chain algebra $\mathbb{K}[C_P]$ is a normal domain.

  For a finite pure poset $P$  with rank $r$, recall that $P_i$ denotes the subset of $P$ consisting of elements of rank $i$ for $0\leq i\leq r$.
   For a subset $S\subseteq P_i$, where $0\leq i<r-1$, let $N_P(S)$ represent the subset of $P_{i+1}$ comprising  elements that cover some element in $S$. Given a function  $f\in \mathbb{N}_0^P$ and a subset $\emptyset \neq A\subseteq P$, for simplicity, we set $f(A):=\sum_{x\in A}f(x)$. The main result of this section is as follows.

\begin{Theorem} \label{main1}
Let $P$ be a finite pure poset with rank $r$, and let $f$ be a function from $P$ to $\mathbb{N}_0$. Then the following two conditions are equivalent:
\begin{enumerate}
    \item[$\mathrm{(1)}$] The function $f$ belongs to the chain semigroup $C_P$.
    \item[$\mathrm{(2)}$] The following two statements hold:
        \begin{enumerate}
            \item[$\mathrm{(a)}$] For any $0 \leq i, j \leq r$,
                \[
                f(P_i)=f(P_j), \mbox{ i.e. }  \sum_{x \in P_i} f(x) = \sum_{x \in P_j} f(x),
                \]
            \item[$\mathrm{(b)}$] For any $0 \leq i \leq r-1$ and any non-empty subset $V$ of $P_i$,
                \[
               f(V)\leq f(N_P(V)),   \mbox{ i.e. }  \sum_{x \in V} f(x) \leq \sum_{x \in N_P(V)} f(x).
                \]
        \end{enumerate}
\end{enumerate}
\end{Theorem}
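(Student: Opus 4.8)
The plan is to route the equivalence through perfect matchings in a family of blown-up bipartite graphs, so that the Marriage Lemma (Lemma~\ref{marriage}) delivers conditions (a) and (b) directly.

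The implication (1)$\Rightarrow$(2) is the routine half. Writing $f=\sum_C m_C\chi_C$ as a non-negative integer combination of characteristic functions of maximal chains, I observe that every maximal chain meets each rank level $P_j$ in exactly one element, so $\chi_C(P_j)=1$ and hence $f(P_j)=\sum_C m_C$ is independent of $j$; this gives (a). For (b), fix a non-empty $V\subseteq P_i$ and note that $f(V)=\sum_C m_C\,\chi_C(V)$ counts, with multiplicity, the chains whose rank-$i$ vertex lies in $V$; the rank-$(i+1)$ vertex of such a chain covers an element of $V$ and therefore lies in $N_P(V)$, so each such chain also contributes to $f(N_P(V))=\sum_C m_C\,\chi_C(N_P(V))$. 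Comparing the two counts yields $f(V)\le f(N_P(V))$.

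The substance lies in (2)$\Rightarrow$(1). For this I introduce the $f$-parallelization: for $0\le i\le r-1$ let $G_i(P)^f$ be the bipartite graph obtained from $G_i(P)$ by replacing each vertex $x$ with $f(x)$ parallel copies and joining a copy of $x\in P_i$ to a copy of $y\in P_{i+1}$ precisely when $y$ covers $x$. The core claim, stated as a separate equivalence, is that $f\in C_P$ if and only if every $G_i(P)^f$ admits a perfect matching. Granting this, I apply the Marriage Lemma to each $G_i(P)^f$: its two sides have sizes $f(P_i)$ and $f(P_{i+1})$, so equality of sizes for every $i$ amounts to $f(P_i)=f(P_{i+1})$ for $0\le i\le r-1$, which is equivalent to condition (a). Moreover, since two copies of the same vertex have identical neighborhoods, the neighborhood of a set $S$ of copies is exactly the set of all copies of $N_P(\pi(S))$, where $\pi(S)$ is the set of underlying vertices occurring in $S$; hence the extremal Hall constraints are attained on full preimages of subsets $V\subseteq P_i$, where they read $f(V)\le f(N_P(V))$, which is condition (b). Thus the Marriage Lemma translates the existence of all perfect matchings into exactly (a) and (b).

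It remains to justify the core claim, which is where the real work is. For the direction needed here, suppose each $G_i(P)^f$ has a perfect matching $\phi_i$, viewed as a bijection from the copies at level $i$ to those at level $i+1$ that respects the covering relation. Composing $\phi_{r-1}\circ\cdots\circ\phi_0$ traces, from each of the $f(P_0)$ copies at level $0$, a sequence of copies whose underlying vertices form a maximal chain of $P$; letting $m_C$ be the number of these traced sequences projecting to a given maximal chain $C$, the bijectivity of the $\phi_i$ ensures that exactly $f(x)$ traced sequences pass through a copy of each $x$, whence $\sum_{C\ni x}m_C=f(x)$ for all $x$ and therefore $f=\sum_C m_C\chi_C\in C_P$. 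I expect the main obstacle to be precisely this composition step: one must verify that independently chosen level-wise matchings glue into genuine maximal chains and that the resulting multiplicities reproduce $f$ exactly at every vertex, rather than merely matching the level totals. The reverse half of the core claim, recovering level-wise perfect matchings from a chain decomposition of $f$, is the easier bookkeeping direction and is handled by reversing this construction.
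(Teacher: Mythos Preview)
Your proposal is correct and follows essentially the same approach as the paper: the paper isolates your ``core claim'' as Lemma~\ref{first} (the $f$-parallelization and perfect-matching characterization) and then applies the Marriage Lemma exactly as you describe, with the same reduction of arbitrary subsets $S$ of copies to full preimages $V^f$ via $V=\Gamma(S)$. The only cosmetic difference is that for (1)$\Rightarrow$(2) the paper also routes through Lemma~\ref{first} and the Marriage Lemma rather than using your direct chain-count, but your argument there is equally valid and arguably cleaner.
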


To establish this result, we need some preparations. Let $P$ denote  a finite pure poset, and let $f$ be a function mapping every element of $P$ to $\mathbb{N}_0$.   We define $S^f:=\bigsqcup_{x\in S}\{x^{[1]}, \ldots, x^{[f(x)]}\}$ for any non-empty subset $S\subseteq P$. By convention, if $f(x)=0$, then $\{x^{[1]}, \ldots, x^{[f(x)]}\}$ is understood to be the empty set.

   We now introduce the notation  $G_i(P)^f$ to denote the bipartite graph with bipartition  $(P_i^f, P_{i+1}^f)$, where, for any $x^{[k]}\in P_i^f$ with $1\leq k\leq f(x)$ and any $y^{[\ell]}\in P_{i+1}^f$ with $1\leq \ell\leq f(y)$, $x^{[k]}$ is adjacent to $y^{[\ell]}$ if and only if $y$ covers $x$. Following \cite[77.3]{P}, we call   $G_i(P)^f$
the $f$-{\it parallelization} of $G_i(P)$ for $i=0,\ldots,r-1$. Recall that for $i=0,\ldots,r-1$,  $G_i(P)$ is the bipartite graph with bipartition  $(P_i, P_{i+1})$,  where for $a\in P_i$ and $b\in P_{i+1}$, $a$ is adjacent to $b$ if and only if $b$ covers $a$.

  Note that if $G$ is a bipartite graph with  partition $(V_1, V_2)$,   a perfect matching of $G$  may be  defined as   a bijection $\phi$ from $V_1$ to $V_2$ such that each $a\in V_1$ is adjacent to  its image $\phi(a)\in V_2$.

\begin{Lemma} \label{first} Let $f:P\rightarrow \mathbb{N}_0$ be a function. Then $f$ belongs to $C_P$ if and only if $G_i(P)^{f}$ has a perfect matching for all $i=0,\ldots,r-1$.
\end{Lemma}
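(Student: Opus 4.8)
The plan is to unravel the definition of $C_P$ into a purely combinatorial statement and then match it against the perfect-matching condition by a threading argument. Recall that $U_f \in \mathbb{K}[C_P]$ exactly when the monomial $U_f = \prod_{x \in P} x^{f(x)}$ is a product of the generators $x_C$; since $\mathbb{K}[C_P]$ is a monomial subalgebra, comparing exponents shows this is equivalent to a decomposition $f = \sum_C a_C \chi_C$ with $a_C \in \mathbb{N}_0$, the sum running over all maximal chains $C$ of $P$. So the content of the lemma is that $f$ admits such a non-negative integer chain decomposition if and only if every $f$-parallelization $G_i(P)^f$ has a perfect matching.

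For the forward implication I would start from a decomposition $f = \sum_C a_C \chi_C$ and regard it as a multiset of $N := \sum_C a_C$ maximal chains $C_1, \ldots, C_N$ (each chain $C$ repeated $a_C$ times). Each $C_j$ meets $P_i$ in a single element $u_j$ and $P_{i+1}$ in a single element $v_j$, with $v_j$ covering $u_j$. Since $f(x) = \#\{j : u_j = x\}$ for $x \in P_i$, I can distribute, for each fixed $x \in P_i$, the chains passing through $x$ at rank $i$ bijectively onto the copies $x^{[1]}, \ldots, x^{[f(x)]}$; doing the same at rank $i+1$ yields two bijections, one between the chains and the rank-$i$ copies and one between the chains and the rank-$(i+1)$ copies. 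Their composite is a bijection $P_i^f \to P_{i+1}^f$ sending each $x^{[k]}$ to some $y^{[\ell]}$ with $y$ covering $x$, i.e.\ a perfect matching of $G_i(P)^f$.

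For the converse, suppose perfect matchings $\phi_i \colon P_i^f \to P_{i+1}^f$ exist for all $i$. Their existence already forces $f(P_i) = f(P_{i+1})$ for every $i$, so each level has the same number $N$ of copies. Starting from a copy $c \in P_0^f$ and applying $\phi_0, \phi_1, \ldots, \phi_{r-1}$ in turn, I obtain a thread $c = c_0, c_1, \ldots, c_r$ with $c_i \in P_i^f$ and, by the adjacency rule in $G_i(P)^f$, the underlying element of $c_{i+1}$ covering that of $c_i$; the underlying elements therefore form a chain of length $r$ with consecutive covers, hence a maximal chain of $P$ by purity. Because every $\phi_i$ is a bijection, distinct threads are pairwise disjoint and partition all copies, each thread using exactly one copy per level. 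Letting $a_C$ be the number of threads whose underlying chain is $C$, I count the copies of a fixed $x \in P_i$ in two ways: there are $f(x)$ of them, and each lies in exactly one thread, whose chain necessarily contains $x$; hence $f(x) = \sum_{C \ni x} a_C = \sum_C a_C \chi_C(x)$, giving $f = \sum_C a_C \chi_C$ and so $f \in C_P$.

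The routine parts are the exponent comparison and the bookkeeping of the two bijections in the forward direction; the step requiring the most care, and the likely main obstacle, is the converse's threading argument. There one must verify both that concatenating the level-wise matchings produces genuine maximal chains (invoking purity, so that a length-$r$ chain with consecutive covers is automatically maximal) and that the bijectivity of each $\phi_i$ forces the threads to partition the copies — precisely the fact that makes the multiplicities $a_C$ reproduce $f$ exactly.
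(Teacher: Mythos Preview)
Your proposal is correct and follows essentially the same approach as the paper: both directions are argued by the same threading/bijection mechanism, with your multiset of chains $C_1,\ldots,C_N$ playing the role of the paper's multisets $K_i$ and your distribution bijections playing the role of the paper's maps $w_i$ in the commutative square defining $\phi_i$. The only cosmetic differences are that the paper treats the two implications in the opposite order and packages the forward direction via an explicit map $\Gamma\colon P^f\to P$ rather than speaking of ``underlying elements''; your explicit remark that purity is what makes a length-$r$ chain with consecutive covers maximal is a point the paper leaves implicit.
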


\begin{proof} By Definition~\ref{cp}, the function $f$ belongs to $C_P$ if and only if the  monomial $U_f$ defined by $U_f:=\prod_{x\in P}x^{f(x)}$ belongs to $\mathbb{K}[C_P]$. To prove our result, we define a map $\Gamma: P^f\rightarrow P$ such that  $\Gamma(a)=x$ for any $a=x^{[j]}\in P^f$ with $x\in P$ and $1\leq j\leq f(x)$. It is easy to see that $U_f=\prod_{a\in P^f} \Gamma(a)$.

 Suppose that for $i=0,\ldots,r-1$, the graph $G_i(P)^f$ has a perfect matching, which we denote by $\phi_i$. Write that $$P_0^f=\{a_1,a_2,\ldots,a_n\}.$$ Let $\varphi_i$ denotes the composition $\phi_i\phi_{i-1}\cdots \phi_0$ for $i=0,\ldots,r-1$. Then
 \begin{align*}& P_1^f=\{\varphi_0(a_1),\varphi_0(a_2),\ldots,\varphi_0(a_n)\},\\ &\quad\quad \cdots,\\
& P_i^f=\{\varphi_{i-1}(a_1),\varphi_{i-1}(a_2),\ldots, \varphi_{i-1}(a_n)\},\\ &\quad\quad \cdots,\\
& P_r^f=\{\varphi_{r-1}(a_1),\varphi_{r-1}(a_2),\ldots, \varphi_{r-1}(a_n)\}.
\end{align*}
Note that  for any $1\leq j\leq n$,
$$\Gamma(a_j)<\Gamma(\varphi_0(a_j))<\cdots<\Gamma(\varphi_i(a_j))<\cdots<\Gamma(\varphi_{r-1}(a_j))$$ is a maximal chain of $P$. Therefore, $U_f$ is expressed as the product $$\prod_{j=1}^n \Big(\Gamma(a_j)\Gamma(\varphi_0(a_j))\cdots \Gamma(\varphi_i(a_j))\cdots \Gamma(\varphi_{r-1}(a_j))\Big ).$$  This implies $U_f$ belongs to $\mathbb{K}[C_P]$, and so $f\in C_P$.

Conversely, suppose that  $U_f$ belongs to $\mathbb{K}[C_P]$. Then there exist a positive integer $n\geq 1$ and $n$ maximal chains of $P$, say $x_{0,j}<x_{1,j}<\cdots<x_{r,j}$ for $j=1,\ldots,n$ such that $$U_f=\prod_{j=1}^n x_{0,j}x_{1,j}\cdots x_{r,j}.$$ Denote by $K_i$ the {\bf multi-set} $\{x_{i,1},x_{i,2}, \ldots, x_{i,n}\}$ for $i=0,\ldots,r$. Note that  $$U_f=\prod_{i=0}^{r}\prod_{x\in K_i}x=\prod_{i=0}^{r}\prod_{a\in P_i^f}\Gamma(a).$$ Fix $i\in [0,r]$. Since  $\prod_{x\in K_i}x=\prod_{a\in P_i^f}\Gamma(a)$, it follows that each $x\in P_i$ appears $f(x)$ times in $K_i$. Thus, there is a bijection, say $w_i$,  from $K_i$ to $P_i^f$ such that $\Gamma(w_i(x_{i,j}))=x_{i,j}$ for all $j=1,\ldots,n$. For $i=0,\ldots,r-1$, consider the following diagram:
 \begin{equation*}\xymatrix{
  K_i \ar[d]_{w_i} \ar[rr]^(.6){\mathbf{b}_i} && K_{i+1} \ar[d]^{w_{i+1}} \\
   P_i^f  \ar@{-->}[rr]^(.6){\exists !\ \phi_i} && P_{i+1}^f .  }
  \end{equation*}
Here, $\mathbf{b}_i$ denotes the bijection $K_i\rightarrow K_{i+1}$ sending  $x_{i,j}$ to $x_{i+1,j}$ for $j=1,\ldots,n$.  Since $\mathbf{b}_i, w_i$ and $w_{i+1}$ are all bijections, there is a unique bijection $\phi_i: P_i^f\rightarrow P_{i+1}^f$ such that the above diagram is commutative. It remains to show that $\phi_i$ is a perfect matching of $G_i(P)^f$.  Let $a\in P_i^f$. There is a unique $1\leq j\leq n$ such that $x_{i,j}\in K_i$  and  $w_i(x_{i,j})=a$. By the definition of $\phi_i$, $\phi_i(a)=w_{i+1}(x_{i+1,j}).$ Therefore, $\Gamma(a)=x_{i,j}<x_{i+1,j}=\Gamma(\phi_i(a))$, implying $a$ is adjacent to $\phi_i(a)$ in $G_i(P)^f$. Consequently, $\phi_i$ is  a perfect matching of $G_i(P)^f$, as required.
\end{proof}

 We now move to prove Theorem~\ref{main1}.

\begin{proof}[Proof of Theorem~\ref{main1}]  In this proof, the notation $G_i(P)^f$ will be written as $G_i^f$ for short.

 (1)$\Rightarrow $ (2) Fix $i\in [0,r-1]$. By Lemma~\ref{first},  $G_i^f$ is a bipartite graph with partition $(P_i^f, P_{i+1}^f)$ that possesses a perfect matching. From this it follows that $|P_i^f|=|P_{i+1}^f|$ by Lemma~\ref{marriage}. As a result, we can conclude that $f(P_i)=f(P_{i+1})$ since $|P_j^f|=f(P_j)$ holds for all $j=0,\ldots,r$. This establishes the validity of statement (a).

Now, let $V$ be a non-empty subset of $P_i$. We may assume without loss of generality that $f(V)\neq 0$, ensuring that $V^f$ is a non-empty subset of $P_i^f$. In view of Lemma~\ref{marriage}, we deduce that $|V^f|\leq |N_{G_i^f}(V^f)|$. Observing that $N_{G_i^f}(V^f)\subseteq N_P(V)^f$ and that $f(N_P(V))=|N_P(V)^f|$, we arrive at the inequality $f(V)\leq f(N_P(V))$. Therefore, statement (b) is also proven.

 (2)$\Rightarrow$ (1): Fix $i\in [0,r-1]$. According to Lemma~\ref{first}, it suffices to show that $G_i^f$ possesses a perfect matching under conditions (a) and (b). Given that $f(P_i)=f(P_{i+1})$, we deduce that $|P_i^f|=|P_{i+1}^f|$. Consider any nonempty subset $S\subseteq P_i^f$. Setting $V=\Gamma(S)$, then it holds that
\[
N_{G_i^f}(S)=(N_P(V))^f \quad \text{and} \quad S\subseteq V^f.
\]
By virtue of (b), we have $|V^f|=f(V)\leq f(N_P(V))$, whence it follows that
\[
|S|\leq |V^f|=f(V)\leq |(N_P(V))^f|=|N_{G_i^f}(S)|.
\]
Consequently, Lemma~\ref{marriage} implies that $G_i^f$ indeed has a perfect matching.\end{proof}

Now, we are ready to prove that the chain algebra $\mathbb{K}[C_P]$ is normal.

\begin{Corollary}\label{normal}
Let $P$ be a finite pure poset. Then $C_P$ is a normal affine semigroup. Consequently, $\mathbb{K}[C_P]$ is a normal domain.
\end{Corollary}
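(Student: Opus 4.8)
The plan is to deduce normality of $C_P$ directly from the inequality description given in Theorem~\ref{main1}, which is the natural payoff of all the preceding work. The key observation is that every condition characterizing membership in $C_P$ is of the form ``a certain $\mathbb{Z}$-linear functional evaluated at $f$ equals zero'' (the equalities $f(P_i)=f(P_j)$ in (2a)) or ``a certain $\mathbb{Z}$-linear functional evaluated at $f$ is nonnegative'' (the inequalities $f(V)\le f(N_P(V))$ in (2b)). Concretely, set $L_{ij}(f):=f(P_i)-f(P_j)$ and $M_{i,V}(f):=f(N_P(V))-f(V)$; these are all functionals with integer coefficients, and Theorem~\ref{main1} says that for $f\in\mathbb{N}_0^P$ we have $f\in C_P$ if and only if $L_{ij}(f)=0$ for all $i,j$ and $M_{i,V}(f)\ge 0$ for all admissible $i$ and all nonempty $V\subseteq P_i$.

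From here I would verify the normality condition of the definition in Subsection~\ref{affine} by a routine computation. Suppose $\alpha,\beta,\gamma\in C_P$, $n>0$, and $n\alpha=n\beta+\gamma$. Since $C_P\subseteq\mathbb{N}_0^P$, the function $\gamma':=\alpha-\beta$ lies in $\mathbb{Z}^P$ and satisfies $\gamma=n\gamma'$; moreover $\gamma'=\gamma/n\in\mathbb{Q}_{\ge 0}^P$, so in fact $\gamma'\in\mathbb{N}_0^P$. It remains to check that $\gamma'\in C_P$, i.e.\ that $\gamma'$ satisfies the conditions of Theorem~\ref{main1}(2). Each functional in question is linear, so $L_{ij}(\gamma')=\tfrac1n L_{ij}(\gamma)=0$ because $\gamma\in C_P$ already satisfies the equalities, and likewise $M_{i,V}(\gamma')=\tfrac1n M_{i,V}(\gamma)\ge 0$ because $\gamma\in C_P$ satisfies the inequalities. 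Hence $\gamma'$ meets both (2a) and (2b), so $\gamma'\in C_P$, establishing normality of $C_P$.

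For the consequence about $\mathbb{K}[C_P]$, I would simply invoke the standard fact recalled in Subsection~\ref{affine}: the semigroup ring of a normal affine semigroup is a normal domain (and Cohen--Macaulay). One should also note at the outset that $C_P$ is genuinely an affine semigroup---finitely generated with $\mathbf 0\in C_P$---which follows since its Hilbert basis is the finite set of characteristic functions of maximal chains, as noted after Definition~\ref{cp}; this is needed so that the cited theorem applies.

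I do not anticipate a serious obstacle here, since the hard analytic content has already been absorbed into Theorem~\ref{main1}: once membership in $C_P$ is cut out by homogeneous linear equalities and homogeneous linear inequalities with the origin as a solution, normality is essentially automatic because such a system defines a saturated (hence normal) semigroup. The only point requiring a little care is the argument that $\gamma'=\gamma/n$ has \emph{nonnegative integer} entries rather than merely rational ones; this is immediate from $\gamma'=\alpha-\beta\in\mathbb{Z}^P$ together with $\gamma=n\gamma'\in\mathbb{N}_0^P$, but it is the step that genuinely uses both the equation $n\alpha=n\beta+\gamma$ and the ambient containment in $\mathbb{N}_0^P$, and so should be stated explicitly.
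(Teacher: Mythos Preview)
Your proposal is correct and follows essentially the same approach as the paper: both verify the normality condition directly by writing $\gamma'=\alpha-\beta=\gamma/n$, checking $\gamma'\in\mathbb{N}_0^P$, and then invoking Theorem~\ref{main1} together with the linearity of the defining conditions to conclude $\gamma'\in C_P$. Your presentation via the functionals $L_{ij}$ and $M_{i,V}$ is slightly more explicit, but the argument is the same.
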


\begin{proof}
Let $f, g,$ and $h$ be functions belonging to $C_P$ such that $nf = ng + h$ for some $n > 0$. To prove that $C_P$ is normal, it suffices to show that $h = nh'$ for some $h' \in C_P$. Define $\alpha: = g - f$. For any $x \in P$, we have $\alpha(x) = g(x) - f(x)$  is an integer. Given that $n\alpha(x) = h(x)$ and $h(x)$ is non-negative, it follows that $\alpha(x) \geq 0$. This establishes that $\alpha$ is a function from $P$ to $\mathbb{N}_0$. Furthermore, since $\alpha = \frac{h}{n}$ and $h\in C_P$, it holds that  $\alpha(X) \leq \alpha(N_P(X))$  for any non-empty subset $X$ of $P_i$ with  $0\leq i\leq r-1$. Additionally, for all $0 \leq i, j \leq r$, we have $\alpha(P_i) = \alpha(P_j)$. These properties of $\alpha$ imply $\alpha \in C_P$ by Theorem~\ref{main1}.

 Therefore, we  have $h = n\alpha$ and $\alpha\in C_P$,  and thus $C_P$ is a normal semigroup. Finally, the normality of $\mathbb{K}[C_P]$ follows from the normality of $C_P$.
\end{proof}

\section{Chain polytope of $P$}

In this section, we prove the chain polytope of any finite pure poset has the integer decomposition property, and characterize when it is indecomposable.

According to Theorem~\ref{main1}, $C_P$ is a homogenous affine semigroup wherein  $\deg(f)=f(P_i)$ for any $i\in [0,r]$. This implies that any $f\in (C_P)_k$ is the sum of $k$ functions from $(C_P)_1$, where $k$ is any integer $>0$. It is worth noting  that $(C_P)_1$ is the set of minimal generators of $C_P$. Here, $(C_P)_k$ is the set of the functions in $C_P$ of degree $k$. For convenience, we call a non-empty subset of $P$  {\it pure} if it is contained in $P_i$ for some $0\leq i\leq r-1$.

\begin{Proposition}\label{IDP}
Let $P$ be a finite pure poset. Then $D_P$  has the integer decomposition property. Consequently, $\mathbb{K}[C_P]$ is the same as the Ehrhart ring $A(D_P)$.
\end{Proposition}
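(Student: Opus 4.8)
The plan is to identify, for every integer $k>0$, the lattice points $kD_P\cap\mathbb{Z}^P$ with the degree-$k$ part $(C_P)_k$ of the chain semigroup, and then to invoke the homogeneity of $C_P$ recorded above. Once the identity $kD_P\cap\mathbb{Z}^P=(C_P)_k$ is established, IDP is immediate: given $\alpha\in kD_P\cap\mathbb{Z}^P=(C_P)_k$, homogeneity guarantees $\alpha=\beta_1+\cdots+\beta_k$ with each $\beta_j\in(C_P)_1$, and by Lemma~\ref{0-1} the set $(C_P)_1$ of characteristic functions of maximal chains is exactly $D_P\cap\mathbb{Z}^P$ (the $\chi_C$ are $(0,1)$-vectors all of weight $r+1$). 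Thus $\alpha$ is a sum of $k$ lattice points of $D_P$, which is precisely the IDP requirement.

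So the real content is the equality $kD_P\cap\mathbb{Z}^P=(C_P)_k$. The inclusion $(C_P)_k\subseteq kD_P\cap\mathbb{Z}^P$ is routine: writing $\alpha\in(C_P)_k$ as $\alpha=\sum_{j=1}^k\chi_{C_j}$ exhibits $\tfrac1k\alpha$ as a convex combination of vertices of $D_P$, so $\alpha\in kD_P\cap\mathbb{Z}^P$. For the reverse inclusion I would take $\alpha\in kD_P\cap\mathbb{Z}^P$ and write $\tfrac1k\alpha=\sum_C\lambda_C\chi_C$ with $\lambda_C\ge 0$ and $\sum_C\lambda_C=1$, so that $\alpha=\sum_C\mu_C\chi_C$ with $\mu_C:=k\lambda_C\ge 0$ and $\sum_C\mu_C=k$; the weights $\mu_C$ need not be integers, but this does not matter. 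It then remains to verify through Theorem~\ref{main1} that $\alpha\in C_P$ with $\deg(\alpha)=k$.

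Checking the two conditions of Theorem~\ref{main1} is the crux, and since both are preserved under nonnegative combinations the verification reduces to the level of a single vertex $\chi_C$. Condition (a) holds because each maximal chain meets every rank level in exactly one element, so $\chi_C(P_i)=1$ for all $i$ and hence $\alpha(P_i)=\sum_C\mu_C=k$ independently of $i$; this also gives $\deg(\alpha)=k$. For condition (b), fix a nonempty $V\subseteq P_i$ with $0\le i\le r-1$; I claim $\chi_C(V)\le\chi_C(N_P(V))$ for every maximal chain $C$. Indeed $\chi_C(V)\in\{0,1\}$ since $C$ has a unique rank-$i$ element, and if $\chi_C(V)=1$ then that element $x\in V$ is covered in $C$ by the rank-$(i+1)$ element $y$, so $y\in N_P(V)$ and $\chi_C(N_P(V))=1$. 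Summing against the weights $\mu_C$ yields $\alpha(V)\le\alpha(N_P(V))$, establishing (b). The one point demanding care, and the main obstacle, is precisely this interplay of representations: the real-coefficient expansion of $\alpha$ is what certifies membership in $C_P$ via the linear inequalities of Theorem~\ref{main1}, while the integrality of $\alpha$ re-enters only through the homogeneity step to produce the genuine decomposition into maximal-chain vectors. This closes the argument, and the final consequence follows since IDP gives $A(D_P)=\mathbb{K}[D_P]$, while $\mathbb{K}[D_P]=\mathbb{K}[C_P]$ was already noted after Lemma~\ref{0-1}.
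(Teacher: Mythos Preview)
Your proposal is correct and follows essentially the same approach as the paper: both arguments take $\alpha\in kD_P\cap\mathbb{Z}^P$, express it as a nonnegative combination of the $\chi_C$, and verify the linear conditions of Theorem~\ref{main1} term-by-term to conclude $\alpha\in(C_P)_k$, after which the homogeneity of $C_P$ (stated just before the proposition) supplies the required decomposition into $k$ elements of $(C_P)_1=D_P\cap\mathbb{Z}^P$. Your write-up is a bit more explicit about the role of homogeneity and Lemma~\ref{0-1}, but the underlying argument is the same.
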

\begin{proof} Let $f\in kD_P\cap \mathbb{Z}^{P}$, where $k$ is any positive integer. To prove that $D_P$ has IDP, it suffices to show that $f$ is contained in $(C_P)_k$. We do this in the following steps.  First, we notice that that $f$ is a function from $P$ to $\mathbb{N}_0$. Next, by Definition~\ref{D_P}, we may write $f=k(a_1f_1+\cdots+a_tf_t)$, where $f_i\in (C_P)_1$ for all $i=1,\ldots,t$ and the coefficients $a_i$ satisfy $a_1+\cdots+a_t=1$ with $a_i\geq 0$ for all $i=1,\ldots,t$.   According to Theorem~\ref{main1}, for  all $i=1,\ldots,t$, we have $f_i(P_j)=1$ for $j=0,\ldots,r$ and $f_i(X)\leq f_i(N_P(X))$ for any pure subset $X$ of $P$. From these, it follows that $f(P_j)=k$ for $j=0,\ldots,r$ and that $$f(X)=k\sum\limits_{j=1}^ta_jf_j(X)\leq k\sum\limits_{j=1}^ta_jf_j(N_P(X))=f(N_P(X))$$ for any pure subset $X$ of $P$. Thus, $f$ belongs to $(C_P)_k$, as required.
\end{proof}

By \cite[Theorem 4.3]{HHO2018}, if a lattice polytope $D$ has  the integer decomposition property, then $\mathbb{K}[D]$ is normal. However, the converse is not true in general, as shown by \cite[Example 4.4]{HHO2018}. Hence, Corollary~\ref{normal} can also be derived from Proposition~\ref{IDP} together with \cite[Theorem 4.3]{HHO2018}.
We now classify when $D_P$ is indecomposable.

\begin{Theorem}\label{ind}
Let $P$ be a finite pure poset with rank $r\geq 1$. Then the following statements are equivalent:
 \begin{enumerate}
\item The chain polytope $D_P$  is indecomposable.
\item $G_i(P)$ is not a complete bipartite graph for all $i=0,\ldots, r-1$.
\item $P$ cannot be expressed as the ordinal sum of two non-empty sub-posets of $P$.
 \end{enumerate}
\end{Theorem}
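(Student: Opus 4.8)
The plan is to prove the cycle $(1)\Rightarrow(3)\Rightarrow(2)\Rightarrow(1)$, each implication via its contrapositive, using the product characterization of indecomposability from \cite[Lemma 30]{HKMM} together with Lemma~\ref{0-1}. For $(1)\Rightarrow(3)$ I argue the contrapositive: suppose $P=Q_1\oplus Q_2$ with both $Q_t$ non-empty. Every maximal chain of $P$ is the disjoint union of a maximal chain of $Q_1$ and one of $Q_2$, and conversely every such pair glues to a maximal chain of $P$; under the identification $\mathbb{R}^P=\mathbb{R}^{Q_1}\times\mathbb{R}^{Q_2}$ this reads $\chi_C=(\chi_{C_1},\chi_{C_2})$ with all combinations occurring. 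Since $\mathrm{conv}(V_1\times V_2)=\mathrm{conv}(V_1)\times\mathrm{conv}(V_2)$ for the vertex sets, I obtain $D_P=D_{Q_1}\times D_{Q_2}$, a product of two non-empty $(0,1)$-polytopes, so $D_P$ is decomposable by \cite[Lemma 30]{HKMM}. This simply repackages, at the level of polytopes, the ordinal-sum/Segre-product correspondence already recorded in Subsection~\ref{ordinal}.

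For $(3)\Rightarrow(2)$, again the contrapositive: assume $G_i(P)$ is complete bipartite for some $i$, and put $Q_1:=P_0\cup\cdots\cup P_i$ and $Q_2:=P_{i+1}\cup\cdots\cup P_r$. I claim $P=Q_1\oplus Q_2$. Given $a\in Q_1$ and $b\in Q_2$, purity lets me extend a maximal chain through $a$ up to some $a'\in P_i$ with $a\le a'$, and a maximal chain through $b$ down to some $b'\in P_{i+1}$ with $b'\le b$; complete bipartiteness of $G_i(P)$ means $b'$ covers $a'$, whence $a\le a'<b'\le b$. This is exactly the defining order relation of the ordinal sum, so $(3)$ fails.

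The substance lies in $(2)\Rightarrow(1)$, which I prove as $\neg(1)\Rightarrow\neg(2)$. Suppose $D_P$ is decomposable; by \cite[Lemma 30]{HKMM} there is a partition $P=A_1\sqcup A_2$ into non-empty parts with $D_P=D_1\times D_2$, where $D_t$ is the projection of $D_P$ onto $\mathbb{R}^{A_t}$. By Lemma~\ref{0-1} the lattice points of $D_P$ are precisely the $\chi_C$, so the product structure says exactly that for any two maximal chains $C,C'$ the vector agreeing with $\chi_C$ on $A_1$ and with $\chi_{C'}$ on $A_2$ is again some $\chi_{C''}$; equivalently $(C\cap A_1)\cup(C'\cap A_2)$ is a maximal chain. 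I extract the combinatorics of this exchange in two stages. First, for the hybrid to contain exactly one element of each $P_j$, the set $T(C):=\{j:\ C\cap P_j\subseteq A_1\}$ must be independent of $C$; since every element of $P_j$ lies on some maximal chain, this forces $A_1=\bigcup_{j\in T}P_j$ and $A_2=\bigcup_{j\notin T}P_j$ for a fixed proper non-empty $T\subseteq\{0,\dots,r\}$. Second, at any rank $j$ that is a boundary of $T$ (exactly one of $j,j+1$ lies in $T$), the hybrid chain has its rank-$j$ and rank-$(j+1)$ entries coming from different chains $C,C'$ yet still satisfying a covering relation; letting $C,C'$ range over all maximal chains forces every element of $P_j$ to be covered by every element of $P_{j+1}$, i.e. $G_j(P)$ is complete bipartite. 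As $T$ is proper and non-empty such a boundary exists, so $(2)$ fails.

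The main obstacle is this final step: converting a product decomposition of the polytope into poset data. The two delicate points are (i) invoking Lemma~\ref{0-1} to reduce the geometric product structure to the purely combinatorial exchange property on maximal chains, and (ii) proving that the defining coordinate partition $A_1\sqcup A_2$ must be \emph{rank-homogeneous}, that is, a union of entire rank levels rather than an arbitrary subset of $P$. It is exactly this rank-homogeneity that pins the splitting to a single rank boundary and makes that boundary level complete bipartite. By contrast, the equivalence $(2)\Leftrightarrow(3)$ and the implication $(1)\Rightarrow(3)$ are routine once the ordinal-sum machinery of Subsection~\ref{ordinal} is in hand.
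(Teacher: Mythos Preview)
Your proposal is correct and follows the same overall architecture as the paper: the equivalences $(2)\Leftrightarrow(3)$ and $(1)\Rightarrow(3)$ are handled exactly as in the paper, and the key implication $(2)\Rightarrow(1)$ proceeds via \cite[Lemma~30]{HKMM} by showing that the coordinate partition $P=A_1\sqcup A_2$ coming from a product decomposition must be rank-homogeneous.

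The only noteworthy difference is in how rank-homogeneity is extracted. The paper proves a single \emph{comparability claim}: for any $a_1\in A_1$ and $a_2\in A_2$, the hybrid $(C_1\cap A_1)\cup(C_2\cap A_2)$ being a maximal chain forces $a_1$ and $a_2$ to be comparable. This one claim immediately yields both (i) rank-homogeneity (distinct elements of the same $P_j$ are incomparable, hence lie in the same $A_t$) and (ii) complete bipartiteness at the boundary (every element of $P_k\subseteq A_1$ is comparable to every element of $P_{k+1}\subseteq A_2$). You instead argue (i) by a cardinality count on $|(\text{hybrid})\cap P_j|$ and then (ii) separately at the boundary. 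Both are valid; the paper's route is marginally slicker since one claim does double duty, while yours makes the mechanism behind rank-homogeneity more explicit.
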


\begin{proof} $(2)\Rightarrow (3)$ Suppose that $P$ is the ordinal sum of non-empty sub-posets $Q_1$ and $Q_2$ of $P$. Since $P$ is pure, it follows that both $Q_1$ and $Q_2$ are pure. Let $k:=\max\{\rho(a)\:\; a\in Q_1\}$. Then $G_k(P)$ is a complete bipartite graph, a contradiction.

$(3)\Rightarrow (2)$
If $G_i(P)$ is a complete bipartite graph for some $i$, then $P$ is the ordinal sum of $Q_1:=P_0\cup\cdots\cup P_i$ and $Q_2:=P_{i+1}\cup\cdots\cup P_r$, a contradiction.

 If  $P$ is the ordinal sum $Q_1\oplus Q_2$, then $D_P=D_{Q_1}\times D_{Q_2}$. This proves $(1)\Rightarrow (3)$. It remains to prove $(2)\Rightarrow (1)$.

$(2)\Rightarrow (1)$  Suppose on the contrary  that $D_P$  is decomposable. Then, according to \cite[Lemma 30]{HKMM}, there exist $(0,1)$-polytopes $D_1\subset \mathbb{R}^{Q_1}$ and $D_2\subset \mathbb{R}^{Q_2}$ such that $D_P$ is the direct product $D_1\times D_2$. Here, $Q_1$ and $Q_2$ are non-empty subsets of $P$ such that $Q_1\cap Q_2=\emptyset$ and $Q_1\cup Q_2=P$.

We use the {\it support} of $f$ to refer to the subset $\{x\in P\:\; f(x)=1\}$  for any $f\in D_P\cap \mathbb{Z}^P$ or $f\in D_1\cap \mathbb{Z}^{Q_1}$ or $f\in D_2\cap \mathbb{Z}^{Q_2}$.  It is clear that every $f$ is determined  by its support. By $D_P=D_1\times D_2$, it  means that if $S$ is the support of a $(0,1)$-vector in $D_P$, then $S\cap Q_1$ and $S\cap Q_2$ are the supports of $(0,1)$-vectors in $D_1$ and $D_2$ respectively. Conversely, if $S_1$ and $S_2$ are the supports of $(0,1)$-vectors in $D_1$ and $D_2$ respectively, then $S_1\cup S_2$ is the support of a $(0,1)$-vector in $D_P$, i.e., a maximal chain of $P$.

We claim that for any $a_1\in Q_1$ and any $a_2\in Q_2$, $a_1$ and $a_2$ are comparable in $P$. Since $P$ is pure, there exist maximal chains $C_1$ and $C_2$ of $P$ such that $a_j\in C_j$ for $j=1,2$. Since $Q_j\cap C_1$ and $Q_j\cap C_2$ are supports of some (0,1)-vectors of $D_j$ for $j=1,2$, the union of $Q_1\cap C_1$ and $Q_2\cap C_2$ is  a maximal chain of $P$. Given that $a_i\in Q_i\cap C_i$ for $i=1,2$, it follows that $a_1$ and $a_2$ belongs to a  maximal chain of $P$. In particular, $a_1$ and $a_2$ are comparable, proving the claim.

 By  this  claim, we have for each $i=0,\ldots,r$, either $P_i\subseteq Q_1$ or $P_i\subseteq Q_2$. Since both $Q_1$ and $Q_2$ are non-empty, there exists $k\in [0,r-1]$ such that $P_k\subseteq Q_1$ and $P_{k+1}\subseteq Q_2$. However,  the claim  implies that $G_k(P)$ is a complete bipartite graph, a contradiction. \end{proof}
 According to \cite[Theorem 32]{HKMM},  if $D$ is an indecomposable lattice polytope, then the Ehrhart ring $A(D)$ is Gorenstein if and only if it is nearly Gorenstein. The following example illustrates  an application of Theorem~\ref{ind}.
\begin{Example}\em  Let $G$ be a connected bipartite graph. Note that a bipartite graph can be regarded as a pure poset of rank one in a natural way, and its edge algebra is isomorphic to the chain algebra of the corresponding poset.  By Theorem~\ref{ind},  the  edge polytope of $G$ is indecomposable precisely when  $G$ is not complete. Hence, according to \cite[Theorem 32]{HKMM},   the edge ring $\mathbb{K}[G]$ of $G$ is Gorenstein if and only if it is nearly Gorenstein in the case when $G$ is not complete.

\end{Example}

\section{$K_P$ and Canonical modules}

In this section we  provide an explicit description of  the canonical module of $\mathbb{K}[C_P]$ for every pure poset $P$. To do so, the following definition is essential.

\begin{Definition} \em Let $P$ be a finite pure poset. A non-empty subset of $P$ is {\it pure} if it   is contained in $P_i$ for some $i \in \{0, 1, \ldots, r-1\}$. For a pure subset  $X$   of $P$, we call $X$  {\it strict} if there exists $f\in C_P$ such that $f(X)< f(N_P(X))$.
\end{Definition}

Recall that $K_P$ is defined as the ideal of $C_P$ consisting of all functions $f \in C_P$ such that for any $g \in C_P$, there exist a positive integer $n$ and an element $h$ in $C_P$ satisfying the equation $nf = g + h$.
 By Lemma~\ref{can0}, the canonical module of $\mathbb{K}[C_P]$ is isomorphic to $\langle K_P\rangle_{\mathbb{K}}$, the $\mathbb{K}$-linear subspace of  $\mathbb{K}[C_P]$ spanned by $\prod_{x\in P}x^{f(x)}$ with $f\in K_P$. Hence, elucidating the structure of the canonical module of $\mathbb{K}[C_P]$ is equivalent to characterizing the functions contained in $K_P$.

\begin{Lemma} \label{can1} Let $f\in C_P$. Then $f$ belongs to $K_P$ if and only if the following conditions hold:

$\mathrm{(1)}$  $f(X)< f(N_P(X))$ for any strict subset $X$ of $P$;

$\mathrm{(2)}$  $f(x)>0$ for all $x\in P$.
\end{Lemma}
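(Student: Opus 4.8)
The plan is to prove both implications directly from the characterization of $C_P$ in Theorem~\ref{main1}, working from the semigroup definition of $K_P$ rather than passing through the relative interior of the cone $\mathbb{R}_{\geq 0}C_P$ (although conceptually conditions (1) and (2) assert exactly that $f$ lies in that relative interior). The observation I would record first is that a pure subset $X$ fails to be strict precisely when $g(X)=g(N_P(X))$ holds for \emph{every} $g\in C_P$: indeed Theorem~\ref{main1}(b) already forces $g(X)\leq g(N_P(X))$ for all $g\in C_P$, so non-strictness upgrades this to an equality valid throughout $C_P$. This single remark is what lets condition (1) be imposed only on strict subsets.

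For the implication that (1) and (2) imply $f\in K_P$, I would fix an arbitrary $g\in C_P$ and seek $n>0$ with $h:=nf-g\in C_P$, verifying the requirements of Theorem~\ref{main1} for $h$ together with non-negativity. Condition (a) is automatic, since $h(P_i)=n\deg(f)-\deg(g)$ is independent of $i$. For non-negativity, condition (2) gives $f(x)\geq 1$, so $h(x)=nf(x)-g(x)\geq 0$ as soon as $n\geq g(x)$. For condition (b) I would split the pure subsets $V$ into strict and non-strict ones: on a non-strict $V$ both $f$ and $g$ satisfy the relevant equality by the opening remark, hence $h(N_P(V))-h(V)=0$; on a strict $V$, condition (1) gives $f(N_P(V))-f(V)\geq 1$, so $h(N_P(V))-h(V)=n\bigl(f(N_P(V))-f(V)\bigr)-\bigl(g(N_P(V))-g(V)\bigr)\geq 0$ once $n\geq g(N_P(V))-g(V)$. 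As $P$ is finite there are only finitely many $x$ and $V$ to control, so a single large $n$ works and $h\in C_P$, proving $f\in K_P$.

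For the converse I would assume $f\in K_P$. To obtain (2), suppose $f(x_0)=0$ for some $x_0$; the preliminary remark supplies $g\in C_P$ with $g(x_0)>0$, and by definition of $K_P$ there is $n$ with $h:=nf-g\in C_P$, yet $h(x_0)=-g(x_0)<0$, contradicting $h\geq 0$. To obtain (1), let $X$ be strict and choose $g\in C_P$ with $g(N_P(X))-g(X)\geq 1$; taking $n$ with $nf-g\in C_P$ and applying Theorem~\ref{main1}(b) to $nf-g$ gives $n\bigl(f(N_P(X))-f(X)\bigr)\geq g(N_P(X))-g(X)\geq 1$, forcing $f(X)<f(N_P(X))$.

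The only genuine subtlety, and the step I would treat most carefully, is the handling of non-strict subsets in the forward direction: one must notice that the defining inequality of $C_P$ is automatically an equality for every semigroup element on such subsets, so no positivity of $f$ there is required and condition (1) need only be demanded on strict subsets. Everything else reduces to choosing $n$ large enough to absorb the finitely many bounded quantities $g(x)$ and $g(N_P(V))-g(V)$.
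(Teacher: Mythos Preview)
Your proof is correct and follows essentially the same line as the paper's: both directions use the definition of $K_P$ and the inequality system of Theorem~\ref{main1}, writing $nf=g+h$ and checking the conditions for $h=nf-g$. You are in fact more explicit than the paper about the non-strict subsets in the $(\Leftarrow)$ direction (the paper leaves this as ``routine''), while the paper gives a concrete choice of $n$ where you just take $n$ large. One small slip: in your argument for condition~(2) you cite ``the preliminary remark'' to obtain $g\in C_P$ with $g(x_0)>0$, but your preliminary remark is about strict subsets; what you actually need here is the fact (recorded at the end of Section~2.2) that every element of $P$ lies on some maximal chain.
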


\begin{proof} $\Rightarrow $  Let $f$ be a function belonging to  $K_P$. Consider any strict subset   $X$  of $P$. There exists $g\in C_P$ such that $g(X)< g(N_P(X))$. Given that $f\in K_P$, it follows that there exists $n>0$ and $h\in C_P$ such that $nf=g+h$. As a result, we have $$f(N_P(X))-f(X)=\frac{1}{n}(g(N_P(X))-g(X))+ \frac{1}{n}(h(N_P(X))-h(X))>0.$$

 Take an arbitrary element $x\in P$. Since there is at least one maximal chain of $P$ containing $x$, it follows that there is $g'\in C_P$ such that $g'(x)>0$. Since $nf=g'+h'$ for some $n>0$ and for some $h'\in C_P$, we obtain $nf(x)=g'(x)+h'(x)>0$. From this, we conclude that $f(x)>0$.

$\Leftarrow$  Suppose that $f$ is a function in $C_P$ that fulfills both conditions (1) and (2).  Given   an arbitrary function  $g\in C_P$,   it is routine to verify that the function $h:=nf-g$ belongs to $C_P$, where $n$ is given as $$n:=\max\{g(N_P(X))-g(X)\:\; X \mbox{  is a strict set of } P\}+\max\{g(x)\:\; x\in P\}.$$
  Therefore,  we conclude that $f\in K_P$, as desired.
\end{proof}

To  characterize all the strict subsets of $P$, we introduce several additional notions. 
Fix $i\in \{0,\ldots,r-1\}$.  Notice that the bipartite graph $G_i(P)$ is the disjoint union of its connected components, say $G_{i,1}, \ldots, G_{i,k_i}$. Since all $G_{i,j}$ are connected bipartite graphs,  we may assume that $G_{i,j}$  has a bipartition $(P_{i,j}, N_P(P_{i,j}))$ such that $P_{i,j}\subseteq P_i$ for $j=1,\ldots,k_i$. One easily observes that $P_i=P_{i,1}\sqcup \cdots \sqcup P_{i,k_i}$ and $P_{i+1}=N_P(P_{i,1})\sqcup \cdots \sqcup N_P(P_{i,k_i}).$

\begin{Definition}\label{5.3} \rm
For all $i=0,\ldots,r-1$ and  $j=1,\ldots,k_i$, we define  $P_{i,j}$ as a {\it connected component of rank } $i$.
\end{Definition}

In the following result, we  identify all the pure subsets of $P$ that are strict.

\begin{Lemma} \label{can2}
Let \( P \) be a finite pure poset of rank \( r \).  Fix an integer \( i \) with \( 0 \leq i \leq r-1 \) and consider any pure subset \( X \subseteq P_i \). Then \( X \) is a strict set if and only if there exists at least one \( j \in \{1, \ldots, k_i\} \) such that \( X \cap P_{i,j} \) is a non-empty proper subset of \( P_{i,j} \).
\end{Lemma}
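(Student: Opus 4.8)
The plan is to reduce the entire statement to the connected components $G_{i,1},\dots,G_{i,k_i}$ of $G_i(P)$. Write $X_j := X \cap P_{i,j}$, so that $X = \bigsqcup_{j=1}^{k_i} X_j$. Since covering relations respect components, $N_P(X_j) \subseteq N_P(P_{i,j})$ and the sets $N_P(P_{i,1}),\dots,N_P(P_{i,k_i})$ are pairwise disjoint; hence $N_P(X) = \bigsqcup_{j=1}^{k_i} N_P(X_j)$. Consequently, for every $f \in C_P$,
\[
f(N_P(X)) - f(X) = \sum_{j=1}^{k_i}\bigl(f(N_P(X_j)) - f(X_j)\bigr),
\]
and by Theorem~\ref{main1}(b) each summand with $X_j \neq \emptyset$ is non-negative (while the summand with $X_j = \emptyset$ vanishes). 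Thus $X$ is strict exactly when some $f \in C_P$ makes this sum strictly positive, and I will analyze the summands component by component.

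The key observation for the ``full'' components is that, for every $f \in C_P$ and every $j$, one has $f(P_{i,j}) = f(N_P(P_{i,j}))$. Indeed, by Lemma~\ref{first} the parallelization $G_i(P)^f$ admits a perfect matching, and since every edge of $G_i(P)^f$ joins $P_{i,j}^f$ to $N_P(P_{i,j})^f$ within a single component, $G_i(P)^f = \bigsqcup_j G_{i,j}^f$; any perfect matching must therefore restrict to a bijection of $P_{i,j}^f$ onto $N_P(P_{i,j})^f$, yielding $f(P_{i,j}) = |P_{i,j}^f| = |N_P(P_{i,j})^f| = f(N_P(P_{i,j}))$. This gives the ($\Rightarrow$) direction in contrapositive form: if every $X_j$ is either empty or all of $P_{i,j}$, then each summand above is either $0$ (empty case) or $f(N_P(P_{i,j})) - f(P_{i,j}) = 0$ (full case), so $f(X) = f(N_P(X))$ for all $f \in C_P$ and $X$ is not strict.

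For the ($\Leftarrow$) direction, suppose some $X_j$ is a non-empty proper subset of $P_{i,j}$; I will exhibit a single maximal chain whose characteristic function witnesses strictness. First, connectivity of $G_{i,j}$ forces $N_P(X_j) \cap N_P(P_{i,j} \setminus X_j) \neq \emptyset$: otherwise $X_j \cup N_P(X_j)$ would be a non-empty proper union of connected components of $G_{i,j}$, contradicting connectedness. Pick $b$ in this intersection together with an element $a' \in P_{i,j}\setminus X_j$ that $b$ covers, and extend the covering relation $a' < b$ (with $a'$ of rank $i$ and $b$ of rank $i+1$) to a maximal chain $C$ of $P$, which is possible since $P$ is pure. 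Then $\chi_C \in (C_P)_1 \subseteq C_P$; its rank-$i$ element $a' \notin X$ gives $\chi_C(X) = 0$, whereas its rank-$(i+1)$ element $b \in N_P(X_j) \subseteq N_P(X)$ gives $\chi_C(N_P(X)) = 1$. Hence $\chi_C(X) < \chi_C(N_P(X))$, so $X$ is strict.

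The only genuinely delicate points are the two combinatorial facts driving each direction: that a perfect matching of $G_i(P)^f$ splits into component-wise perfect matchings (so full components contribute nothing), and the connectivity argument producing a boundary vertex $b$ adjacent to both $X_j$ and its complement in $P_{i,j}$. Both are elementary, but they are precisely where the decomposition into connected components enters; the remainder is bookkeeping with the disjoint decomposition of $N_P(X)$.
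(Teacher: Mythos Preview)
Your proof is correct and follows essentially the same approach as the paper: decompose $X$ and $N_P(X)$ along the connected components $P_{i,j}$, show each ``full'' component contributes zero to $f(N_P(X))-f(X)$, and for a proper non-empty $X_j$ build a witnessing maximal chain through a vertex $b$ covered from both $X_j$ and its complement. The only cosmetic difference is that the paper derives $f(P_{i,j})=f(N_P(P_{i,j}))$ by summing the inequalities $f(P_{i,j})\le f(N_P(P_{i,j}))$ from Theorem~\ref{main1} and comparing with $f(P_i)=f(P_{i+1})$, whereas you obtain it by restricting the perfect matching of $G_i(P)^f$ from Lemma~\ref{first} to each component; both are valid.
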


\begin{proof}
We first show that no connected component of rank \( i \) is a strict set. Fix \( f \in C_P \). Then \( f(P_{i,j}) \leq f(N_P(P_{i,j})) \) for all \( j = 1, \ldots, k_i \). Furthermore, we have
\[
f(P_i) = \sum_{j=1}^{k_i} f(P_{i,j}) \quad \text{and} \quad f(P_{i+1}) = \sum_{j=1}^{k_i} f(N_P(P_{i,j})).
\]
Since \( f(P_i) = f(P_{i+1}) \), it follows that \( f(P_{i,j}) = f(N_P(P_{i,j})) \) for all \( j = 1, \ldots, k_i \). Thus, no \( P_{i,j} \) is a strict set.

Next, we show that every non-empty proper subset of any connected component of rank \( i \) is a strict set. Let \( S \) be a non-empty proper subset of \( P_{i,j} \) for some \( j \). Since the bipartite graph \( G_{i,j} \) with bipartition \( (P_{i,j}, N_P(P_{i,j})) \) is connected, there exist \( \alpha \in S \) and \( \beta \in P_{i,j} \setminus S \) such that \( N_P(\alpha) \cap N_P(\beta) \) is non-empty. Let \( \gamma \in N_P(\alpha) \cap N_P(\beta) \). Then there is at least one maximal chain of the following form:
\[
\alpha_0 < \alpha_1 < \alpha_2 < \cdots < \alpha_{i-1} < \beta < \gamma < \gamma_{i+1} < \cdots < \gamma_r.
\]
Define the function \( w: P \to \mathbb{N}_0 \) corresponding to this chain by setting \( w(p) = 1 \) if \( p \) belongs to this chain and \( w(p) = 0 \) otherwise. Since \( \gamma \in N_P(S) \), we have
\[
\sum_{p \in S} w(p) = 0 \quad \text{and} \quad \sum_{p \in N_P(S)} w(p) \geq w(\gamma) = 1.
\]
This implies \( w(S) < w(N_P(S)) \), thereby proving that \( S \) is a strict set.

Finally, let \( X_j = X \cap P_{i,j} \) for $j=1,\ldots,k_i$. Then \( X = \bigsqcup_{j=1}^{k_i} X_j \) and \( N_P(X) = \bigsqcup_{j=1}^{k_i} N_P(X_j) \).  Therefore, \( f(X) < f(N_P(X)) \) if and only if there is at least one \( j \in \{1, \ldots, k_i\} \) such that \( f(X_j) < f(N_P(X_j)) \). The result now follows  from this observation, combined with the two conclusions established earlier.
\end{proof}

We are now in the position  to present the main result of this section.
\begin{Theorem} \label{can} Let $P$  be  a finite pure poset of rank $r$. Then for any  $f\in C_P$,  $f$ belongs to $K_P$ if and only if the following conditions are satisfied:

$\mathrm{(1)}$ For any  $0\leq i\leq r-1$ and for any non-empty proper subset of any connected component of rank $i$, it holds that $f(X)< f(N_P(X))$,

$\mathrm{(2)}$  For all $x\in P$, $f(x)>0$ .
\end{Theorem}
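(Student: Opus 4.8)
The plan is to combine Lemma~\ref{can1} with Lemma~\ref{can2}, since together they already contain all the substantive content. By Lemma~\ref{can1}, for $f \in C_P$ one has $f \in K_P$ precisely when (i) $f(X) < f(N_P(X))$ holds for every strict subset $X$ of $P$, and (ii) $f(x) > 0$ for all $x \in P$. Condition (ii) is word-for-word condition (2) of the theorem, so the whole problem reduces to showing that condition (i) of Lemma~\ref{can1}, which quantifies over all strict sets, is equivalent to condition (1) of the theorem, which only asks for $f(X) < f(N_P(X))$ over the non-empty proper subsets $X$ of the connected components of rank $i$.

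One direction is immediate. By Lemma~\ref{can2}, any non-empty proper subset $X$ of a connected component $P_{i,j}$ is a strict set, because $X \cap P_{i,j} = X$ is itself a non-empty proper subset of $P_{i,j}$. Hence Lemma~\ref{can1}(i) applies directly and forces $f(X) < f(N_P(X))$, which is exactly condition (1) of the theorem.

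For the converse I would take an arbitrary strict subset $X \subseteq P_i$ and decompose it along the connected-component partition $P_i = P_{i,1} \sqcup \cdots \sqcup P_{i,k_i}$. Setting $X_j := X \cap P_{i,j}$, one has $X = \bigsqcup_{j} X_j$, and since the neighborhoods $N_P(P_{i,1}), \ldots, N_P(P_{i,k_i})$ partition $P_{i+1}$, one also has the disjoint union $N_P(X) = \bigsqcup_{j} N_P(X_j)$. I would then split into three cases for each $j$: if $X_j = \emptyset$ then $f(X_j) = f(N_P(X_j)) = 0$; if $X_j = P_{i,j}$ then $f(X_j) = f(N_P(X_j))$ by the equality $f(P_{i,j}) = f(N_P(P_{i,j}))$ already established in the proof of Lemma~\ref{can2}; and if $X_j$ is a non-empty proper subset of $P_{i,j}$ then condition (1) of the theorem gives $f(X_j) < f(N_P(X_j))$. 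Summing these relations over $j$ and invoking Lemma~\ref{can2} to guarantee that $X$ being strict forces at least one index $j$ into the third case, I obtain $f(X) < f(N_P(X))$, which is Lemma~\ref{can1}(i).

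The argument is largely bookkeeping, and I expect the only point needing genuine care to be the disjointness $N_P(X) = \bigsqcup_{j} N_P(X_j)$. This is what licenses summing the component-wise (in)equalities without overlap, and in particular ensures that the single strict contribution coming from the third case survives rather than being cancelled against the equalities in the other cases.
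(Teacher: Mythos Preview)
Your proposal is correct and follows essentially the same approach as the paper: reduce to Lemma~\ref{can1}, observe that one direction is immediate from Lemma~\ref{can2}, and for the other direction decompose a strict set $X$ along the connected components of its rank, then sum the componentwise inequalities to get $f(X)<f(N_P(X))$. Your treatment is slightly more explicit than the paper's (you spell out the three cases $X_j=\emptyset$, $X_j=P_{i,j}$, $X_j$ proper) whereas the paper simply writes the summed inequality and leaves the $\leq$ for the other indices implicit via $f\in C_P$; otherwise the arguments are the same.
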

\begin{proof} Let $f\in C_P$. Given Lemmas~\ref{can1} as well as \ref{can2}, we only  need to show that if condition (1) holds, then $f(X)< f(N_P(X))$ for any strict subset $X$ of $P$. Let $X\subseteq P_i$ be a strict subset of $P$. By  Lemma~\ref{can2}, there exists  $1\leq j_0\leq k_i$ such that $X\cap P_{i,j_0}$ is a non-empty proper subset of $P_{i,j_0}$. It follows from condition (1) that $f(X\cap P_{i,j_0})<f(N_P(X\cap P_{i,j_0}))$. Consequently, we have $$f(X)=\sum\limits_{j=1}^{k_i}f(X\cap P_{i,j})<\sum\limits_{j=1}^{k_i}f(N_P(X\cap P_{i,j}))=f(N_P(X)),$$ as required.
\end{proof}

\section{Dimension of a chain algebra}
In this section, we present an explicit formula for the Krull dimension of  chain algebras,  alongside a definitive formula for the dimension of chain polytopes associated with pure posets. We begin with a result that may be   well-known in linear algebra; however, for completeness,  we include a proof.

\begin{Lemma} \label{linear}  Let $V$ be a linear subspace of the $\mathbb{Q}$-space $\mathbb{Q}^n$ and let $S$ be an open subset of $\mathbb{Q}^n$. If $S\cap V\neq \emptyset$, then $V$ is spanned by the vectors belonging to $S\cap V$.
\begin{proof}Let $\alpha_0\in S\cap V$ and let $\alpha_1,\ldots,\alpha_r$ be a $\mathbb{Q}$-basis for $V$. Since $\alpha_0$ is an interior point of $S$, there exists a positive integer $k>0$ such that $\frac{1}{k}(\alpha_i-\alpha_0)+\alpha_0$ belongs to $S$ for $i=1,\ldots,r$.  Thus, $S\cap V$ contains the vectors $\frac{1}{k}(\alpha_i-\alpha_0)+\alpha_0$ for $i=1, \ldots, r$ as well as $\alpha_0$. This implies that $\alpha_1,\ldots,\alpha_r$ belong to $\langle S\cap V\rangle_{\mathbb{Q}}$. Here, $\langle S\cap V\rangle_{\mathbb{Q}}$ denotes the subspace of  $\mathbb{Q}^n$  spanned by $S\cap V$. Hence, we have $V=\langle S\cap V\rangle_{\mathbb{Q}}$, as required.
\end{proof}

\end{Lemma}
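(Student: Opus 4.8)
The plan is to fix any point $\alpha_0\in S\cap V$ and exploit the openness of $S$ to manufacture, for each vector of a chosen basis of $V$, a nearby point lying in $S\cap V$; recovering the basis vectors from these points will then show that $S\cap V$ spans $V$.

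First I would choose $\alpha_0\in S\cap V$ (which exists by hypothesis) together with a $\mathbb{Q}$-basis $\alpha_1,\ldots,\alpha_r$ of $V$. For each $i$ and each positive integer $k$, set $\beta_i^{(k)}:=\alpha_0+\tfrac{1}{k}(\alpha_i-\alpha_0)$. Since $\alpha_0,\alpha_i\in V$ and $V$ is a linear subspace, every $\beta_i^{(k)}$ is again a linear combination of elements of $V$ and so lies in $V$; it is also rational, hence in $\mathbb{Q}^n$.

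The key step is to force these points into $S$ as well. Writing $S=U\cap\mathbb{Q}^n$ with $U$ open in $\mathbb{R}^n$, openness gives an $\epsilon>0$ with $B(\alpha_0,\epsilon)\subseteq U$; since $\|\beta_i^{(k)}-\alpha_0\|=\tfrac{1}{k}\|\alpha_i-\alpha_0\|\to 0$ as $k\to\infty$, a single large $k$ works simultaneously for all $i=1,\ldots,r$, placing $\beta_1^{(k)},\ldots,\beta_r^{(k)}$ (and $\alpha_0$ itself) in $S\cap V$. Solving $\beta_i^{(k)}=\alpha_0+\tfrac{1}{k}(\alpha_i-\alpha_0)$ for $\alpha_i$ yields $\alpha_i=k\beta_i^{(k)}-(k-1)\alpha_0$, exhibiting each basis vector as a $\mathbb{Q}$-linear combination of elements of $S\cap V$. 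Therefore $\langle S\cap V\rangle_{\mathbb{Q}}\supseteq V$, while $S\cap V\subseteq V$ gives the reverse inclusion, whence $V=\langle S\cap V\rangle_{\mathbb{Q}}$.

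Given how elementary the statement is, I do not expect a serious obstacle; the only point requiring care is the third step, where one must be sure that ``open in $\mathbb{Q}^n$'' really does supply a ball of positive radius about $\alpha_0$ (via the topology induced from $\mathbb{R}^n$), so that the rational perturbations $\beta_i^{(k)}$ remain inside $S$ once $k$ is large. Everything else is bookkeeping.
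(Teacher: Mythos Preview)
Your proof is correct and follows essentially the same route as the paper's: both pick $\alpha_0\in S\cap V$ and a basis $\alpha_1,\ldots,\alpha_r$ of $V$, use openness to place the perturbed points $\alpha_0+\tfrac{1}{k}(\alpha_i-\alpha_0)$ in $S\cap V$ for large $k$, and then recover the $\alpha_i$ as $\mathbb{Q}$-combinations of these points and $\alpha_0$. Your write-up is slightly more explicit about the topology and the back-substitution $\alpha_i=k\beta_i^{(k)}-(k-1)\alpha_0$, but the argument is the same.
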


We refer to Definition~\ref{5.3} for the definition of connected components of rank $i$.

\begin{Theorem} \label{krull} Let $P$ be a finite pure poset with rank r. Let $k_i$ denote the number of connected components of rank $i$ for $i=0,\ldots, r-1$. Then $$\dim \mathbb{K}[C_P]=|P|-(k_0+\cdots+k_{r-1}).$$
\end{Theorem}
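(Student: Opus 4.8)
The plan is to reduce the computation to linear algebra. By \cite[Proposition 3.1]{HHO2018}, $\dim \mathbb{K}[C_P]$ equals the dimension of the $\mathbb{Q}$-linear space $V := \langle C_P\rangle_{\mathbb{Q}}$ spanned inside $\mathbb{Q}^P$ by the functions of $C_P$. For each $0\le i\le r-1$ and each connected component $P_{i,j}$ of rank $i$ (Definition~\ref{5.3}), I introduce the linear form $E_{i,j}(f):=f(P_{i,j})-f(N_P(P_{i,j}))$, giving $K:=k_0+\cdots+k_{r-1}$ homogeneous equations, and let $W\subseteq\mathbb{Q}^P$ be their common solution space. I would then prove that (i) these $K$ forms are linearly independent, so $\dim W=|P|-K$, and (ii) $V=W$; together these give the formula.

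For (i), order both the rows (by rank of the defining component) and the columns (indexed by $P$, grouped by rank). The form $E_{i,j}$ has coefficient $+1$ on $P_{i,j}\subseteq P_i$ and $-1$ on $N_P(P_{i,j})\subseteq P_{i+1}$, and a column $x\in P_i$ is met only by rank-$i$ forms (as $+1$) and rank-$(i-1)$ forms (as $-1$). Hence a relation $\sum c_{i,j}E_{i,j}=0$ can be resolved rank by rank: evaluation on $P_0$ forces all $c_{0,j}=0$, and inductively, once $c_{i-1,j'}=0$, evaluation on $P_i$ forces $c_{i,j}=0$. This is precisely the \emph{row staircase} independence. The inclusion $V\subseteq W$ is immediate, since the intermediate computation in Lemma~\ref{can2} shows every $f\in C_P$ satisfies $f(P_{i,j})=f(N_P(P_{i,j}))$, i.e. $E_{i,j}(f)=0$, and $W$ is a subspace.

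The crux is the reverse inclusion $W\subseteq V$, and this is where Theorem~\ref{can} enters. First I would exhibit a point $f_0\in K_P$: for each strict set $X$ choose $g_X\in C_P$ with $g_X(X)<g_X(N_P(X))$, for each $x\in P$ choose $g_x\in C_P$ with $g_x(x)>0$, and set $f_0:=\sum_X g_X+\sum_x g_x$. Because $g(N_P(Y))-g(Y)\ge 0$ for every $g\in C_P$ and every pure $Y$ by Theorem~\ref{main1}(2)(b), the function $f_0$ satisfies all strict inequalities of Theorem~\ref{can}, so $f_0\in K_P$ and in particular $K_P\neq\emptyset$. The strict conditions of Theorem~\ref{can} cut out an open cone $S:=\{g\in\mathbb{Q}^P : g(x)>0\ \forall x,\ g(X)<g(N_P(X))\text{ for every strict }X\}$, and $f_0\in S\cap W$. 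The decisive point is that every integer point $g\in S\cap W$ lies in $C_P$: the equalities defining $W$ give condition (a) of Theorem~\ref{main1} after summing $E_{i,j}$ over $j$, while condition (b) follows by decomposing any $X\subseteq P_i$ along the components $P_{i,j}$ and combining the strict inequalities on the proper pieces with the equalities on the full pieces. Finally, since $S$ is an open cone meeting $W$ at $f_0$, Lemma~\ref{linear} gives $W=\langle S\cap W\rangle_{\mathbb{Q}}$; clearing denominators (scaling each $g\in S\cap W$ by a suitable positive integer keeps it in $S$, in $W$, and lands it in $C_P$) yields $W\subseteq\langle C_P\rangle_{\mathbb{Q}}=V$. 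Hence $V=W$ and $\dim\mathbb{K}[C_P]=\dim W=|P|-(k_0+\cdots+k_{r-1})$.

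I expect the main obstacle to be exactly this reverse inclusion $W\subseteq V$: one must produce a rational point in the relative interior of the cone $C_P$ inside $W$, which is precisely what the characterization of $K_P$ in Theorem~\ref{can} supplies, and then transfer the spanning property from the open region to the semigroup using the homogeneity (cone) structure together with Lemma~\ref{linear}. The linear independence in (i) and the inclusion $V\subseteq W$ are routine by comparison.
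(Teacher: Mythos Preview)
Your proposal is correct and follows essentially the same route as the paper's proof: both identify $\langle C_P\rangle_{\mathbb{Q}}$ with the solution space of the linear system $E_{i,j}=0$, use the nonemptiness of $K_P$ together with Lemma~\ref{linear} to obtain the hard inclusion, and read off the dimension from the row-staircase form of the coefficient matrix. Your presentation is slightly more explicit in two places---you construct a concrete $f_0\in K_P$ rather than simply invoking $K_P\neq\emptyset$, and you spell out the decomposition argument that integer points of $S\cap W$ lie in $C_P$---whereas the paper absorbs these into the identifications ``$C_P=$ integral solutions of $(\S),(\dagger),(\ddagger)$'' and ``$K_P=$ integral solutions of $(\S),(\dagger_<),(\ddagger_<)$'' and passes through the intermediate rational cone $V\cap B$; but the underlying argument is the same.
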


\begin{proof} For each $p\in P$, let $X_p$ denote an indeterminate indexed by $p$. Consider the following systems of equalities or inequalities:
\begin{equation} \tag{\S} \sum_{p\in P_{i,j}} X_p=\sum_{p\in N_P(P_{i,j})}X_p,\qquad \mbox{ for } i=0,\ldots,r-1, j=1,\ldots,k_i.
\end{equation}

\begin{equation} \tag{\dag} X_p\geq 0, \quad \mbox{  for } p\in P.
\end{equation}

\begin{equation} \tag{$\dag_{<}$} X_p> 0, \quad \mbox{  for } p\in P.
\end{equation}

\begin{equation} \tag{\ddag} \sum_{p\in S} X_p\leq \sum_{p\in N_P(S)}X_p ,    \mbox{ for } \emptyset\neq S\subsetneq P_{i,j}, 1\leq i\leq r-1, 1\leq j\leq k_i.
\end{equation}

\begin{equation} \tag{$\ddag_{<}$} \sum_{p\in S} X_p<\sum_{p\in N_P(S)}X_p , \mbox{ for } \emptyset\neq S\subsetneq P_{i,j}, 1\leq i\leq r-1, 1\leq j\leq k_i.
\end{equation}

 It is natural to identify every  rational or integral solution to part or all of these equalities or inequalities with  a function mapping from $P$ to the set of rational numbers or integral numbers.
Under this identification,
 we have \begin{itemize}
  \item  $C_P$ consists of  all integral solutions that satisfy $(\S), (\dag)$ and $(\ddag)$ by Theorem~\ref{main1}, and
   \item $K_P$  consists of  all integral solutions that satisfy $(\S), (\dag_{<})$ and $(\ddag_{<})$ by Theorem~\ref{can}.
   \end{itemize}

For a subset $M$ of the $\mathbb{Q}$-space $\mathbb{Q}^P$ (the set of all functions from $P$ to $\mathbb{Q}$), we denote by $ \langle M \rangle_{\mathbb{Q}}$ the $\mathbb{Q}$-subspace of $\mathbb{Q}^P$ spanned by $M$.  It is evident that the Krull dimension of $\mathbb{K}[C_P]$ is equal to $\dim_{\mathbb{Q}} \langle C_P \rangle_{\mathbb{Q}}$.  We also introduce the following notions:
\begin{itemize}
\item Let $V$ denote the set of all rational solutions to $(\S)$;
\item Let $A$ denote the set of all rational solutions to   $(\dag_{<})$ and $(\ddag_{<})$;
\item Let $B$ denote the set of all rational  solutions to   $(\dag)$ and $(\ddag)$.
\end{itemize}

Note that $V$ is a $\mathbb{Q}$-linear subspace of $\mathbb{Q}^P$, while $A$ is an open subset of $\mathbb{Q}^P$. It is straightforward to observe that $V\cap A\subseteq V\cap B\subseteq V$.  Since $K_P \subseteq V\cap A$, it follows that  $V\cap A$ is non-empty.  Therefore, according to Lemma~\ref{linear}, $\langle V\cap A\rangle_{\mathbb{Q}}$ coincides with $V$. Note that $C_P$ and $V\cap B$   are respectively composed of the integer solutions and the rational solutions of the same system  of homogeneous linear inequalities. Thus, they span the same $\mathbb{Q}$-linear space, that is,  $\langle C_P\rangle_{\mathbb{Q}}=\langle V\cap B\rangle_{\mathbb{Q}}$. It follows that the Krull dimension of $\mathbb{K}[C_P]$, which equals to the dimension of   $\langle C_P\rangle_{\mathbb{Q}}$ by \cite[Proposition 3.1]{HHO2018}, is equal to the dimension $\dim_{\mathbb{Q}}V$. Finally, observing that the rows of the coefficient matrix of the  homogeneous system $(\S)$ of linear equations  are linear independent (in fact, the coefficient matrix of $(\S)$ is in row echelon form), the result follows from the basic linear algebra. \end{proof}

An application of Theorem~\ref{krull} recovers \cite[Theorem 3.4]{GN}.

\begin{Corollary} Let $L$ be   a finite distributive lattice of  rank $r$.  Then the chain algebra $\mathbb{K}[C_L]$ has the dimension $|L|-r$.
\end{Corollary}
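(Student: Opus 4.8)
The plan is to apply Theorem~\ref{krull} directly. Writing $r=\mathrm{rank}(L)$, that formula gives $\dim \mathbb{K}[C_L]=|L|-(k_0+\cdots+k_{r-1})$, where $k_i$ is the number of connected components of the bipartite graph $G_i(L)$. Each $k_i\geq 1$, and there are exactly $r$ indices $i\in\{0,\ldots,r-1\}$, so it suffices to prove that every $G_i(L)$ is connected, i.e. that $k_i=1$ for all $i$. This immediately yields $k_0+\cdots+k_{r-1}=r$ and hence the claimed dimension $|L|-r$. Thus the entire content of the corollary reduces to one combinatorial fact about the level structure of a finite distributive lattice.

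To establish the connectivity of $G_i(L)$, I would pass to Birkhoff's representation $L\cong J(Q)$, where $Q$ is the poset of join-irreducibles of $L$ and $J(Q)$ is the lattice of order ideals (down-sets) of $Q$ ordered by inclusion; here $r=|Q|$. Under this identification the rank of an ideal equals its cardinality, so $L_i$ is the set of size-$i$ order ideals, and $I'\in L_{i+1}$ covers $I\in L_i$ exactly when $I\subseteq I'$. Since $G_i(L)$ has no isolated vertices (every size-$i$ ideal can be enlarged by one element as $i<r$, and every size-$(i+1)$ ideal has a maximal element to delete), its connectivity is equivalent to the assertion that any two size-$i$ ideals $I,I'$ are joined by a sequence of \emph{swaps}, where two size-$i$ ideals are adjacent precisely when they share a common upper cover in $L_{i+1}$, equivalently when their union is an ideal of size $i+1$.

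The main step, and the crux of the argument, is to prove this swap-connectivity by induction on $|I\triangle I'|$. Given $I\neq I'$, I would pick $y\in I'\setminus I$ minimal in $Q$; minimality forces every element below $y$ to lie in $I\cap I'$, so $I\cup\{y\}$ is again an order ideal, of size $i+1$. Because $I\cup\{y\}$ is strictly larger than the ideal $I'$, its maximal elements cannot all lie in $I'$ (otherwise $I'$ would contain their down-closure $I\cup\{y\}$), so one can choose a maximal element $x$ of $I\cup\{y\}$ with $x\notin I'$; since $y\in I'$ we have $x\neq y$, hence $x\in I\setminus I'$. Then $I'':=(I\cup\{y\})\setminus\{x\}$ is a size-$i$ ideal sharing the upper cover $I\cup\{y\}$ with $I$, so $I$ and $I''$ are swap-adjacent. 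A direct check gives $I''\triangle I'=(I\triangle I')\setminus\{x,y\}$, so $|I''\triangle I'|=|I\triangle I'|-2$, and the induction hypothesis connects $I''$ to $I'$. This shows every $G_i(L)$ is connected, completing the proof.

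Distributivity is used essentially only through Birkhoff's theorem, which supplies the dictionary ``rank $=$ cardinality'' and ``cover $=$ addition of a single element''; thereafter the swap induction takes place entirely within the Boolean world of down-sets. I expect the only delicate bookkeeping to be the verification that the chosen $x$ is genuinely maximal in $I\cup\{y\}$ and distinct from $y$, so that $I''$ is a bona fide order ideal of the correct size; once that is confirmed the symmetric-difference count, and hence the induction, is routine.
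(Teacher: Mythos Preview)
Your proof is correct and follows the same overall strategy as the paper: apply Theorem~\ref{krull} and reduce the corollary to the single combinatorial fact that each bipartite graph $G_i(L)$ is connected, so that $k_0+\cdots+k_{r-1}=r$.

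The only difference is in how this connectivity is established. The paper does not prove it directly; it simply cites the opening portion of the proof of \cite[Theorem~3.4]{GN}, where the connectedness of $G_i(L)$ for a finite distributive lattice is already shown. You instead supply a self-contained argument via Birkhoff's representation $L\cong J(Q)$ and an induction on $|I\triangle I'|$ using swaps. Your swap argument is sound: the key point---that $I\cup\{y\}$ has cardinality $i+1>|I'|$, hence cannot have all its maximal elements inside the ideal $I'$---is exactly what guarantees the existence of $x\in I\setminus I'$ maximal in $I\cup\{y\}$, and the symmetric-difference bookkeeping $I''\triangle I'=(I\triangle I')\setminus\{x,y\}$ checks out. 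One phrasing to tighten: ``$I\cup\{y\}$ is strictly larger than the ideal $I'$'' reads as set containment at first glance; make explicit that you mean cardinality, since $I\cup\{y\}\supseteq I'$ is generally false. Otherwise the argument is complete and has the advantage of being independent of \cite{GN}.
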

\begin{proof} In view of the initial portion of the proof of \cite[Theorem 3.4]{GN}, $G_i(L)$ is a connected bipartite graph for each $0\leq i\leq r-1$. Consequently, the desired result follows  from Theorem~\ref{krull}.
\end{proof}

  In view of \cite[Section 4.2.4]{HHO2018} together with \cite[Theorem 3.1]{HHO2018}, the dimension of a lattice polytope $D$ is equal to the  dimension of the toric ring $\mathbb{K}[D]$ minus one. Hence, the following result is immediate.

\begin{Corollary} With the same assumptions as in Theorem~\ref{krull}, we have $\dim D_P=|P|-(k_0+\cdots+k_{r-1})-1$.
\end{Corollary}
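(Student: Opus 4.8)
The plan is to derive this directly from Theorem~\ref{krull} by relating the dimension of the chain polytope $D_P$ to the Krull dimension of its associated algebra. The key input is the standard fact recorded immediately above the statement, namely that for any lattice polytope $D$ one has $\dim D = \dim \mathbb{K}[D] - 1$, which follows from \cite[Section 4.2.4]{HHO2018} together with \cite[Theorem 3.1]{HHO2018}. Thus the whole argument reduces to computing $\dim \mathbb{K}[D_P]$ and then subtracting one.

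First I would recall the identification of the toric ring $\mathbb{K}[D_P]$ with the chain algebra $\mathbb{K}[C_P]$, already established in Section 2. Since $D_P$ is a $(0,1)$-polytope whose vertices are the characteristic functions $\chi_C$ of the maximal chains of $P$, and all these vertices share the same coordinate sum $r+1$, Lemma~\ref{0-1} guarantees that $D_P \cap \mathbb{Z}^P = \{\chi_C \: C \text{ a maximal chain}\}$. Consequently the generators of $\mathbb{K}[D_P]$ are exactly the monomials $x_C$ generating $\mathbb{K}[C_P]$, so $\mathbb{K}[D_P] = \mathbb{K}[C_P]$.

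Applying Theorem~\ref{krull} then gives $\dim \mathbb{K}[D_P] = \dim \mathbb{K}[C_P] = |P| - (k_0 + \cdots + k_{r-1})$. Substituting this into the polytope–algebra dimension relation yields
\[
\dim D_P = \dim \mathbb{K}[D_P] - 1 = |P| - (k_0 + \cdots + k_{r-1}) - 1,
\]
as claimed.

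There is essentially no genuine obstacle in this step: the statement is a formal consequence of Theorem~\ref{krull} and the cited dimension formula. The only point requiring justification is the equality $\mathbb{K}[D_P] = \mathbb{K}[C_P]$, but this has already been settled via Lemma~\ref{0-1} in the preliminaries, so the corollary is indeed immediate.
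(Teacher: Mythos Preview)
Your argument is correct and follows exactly the paper's approach: the corollary is declared immediate from Theorem~\ref{krull} together with the cited fact that $\dim D = \dim \mathbb{K}[D] - 1$, using the identification $\mathbb{K}[D_P]=\mathbb{K}[C_P]$ already established in the preliminaries via Lemma~\ref{0-1}.
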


\section{Posets of width 2}
To explore the algebraic properties of chain algebras such as regularity, Gorensteinness, and near-Gorensteinness, we need to refine our focus. A significant, yet simpler, class of pure posets to consider is those of width 2.
Recall the {\it width} of a poset is the maximal size of an anti-chain contained in this poset. Let $P$ be a pure poset of width 2.  In this section, we will compute the regularity of $\mathbb{K}[C_P]$ and characterize the conditions under which $\mathbb{K}[C_P]$ is Gorenstein or nearly Gorenstein.

 Hereafter, we always assume that $P$ is a pure poset of width 2 and  rank $r\geq 1$ unless otherwise stated. As previously, we denote by $P_i$  the set of elements of $P$ with rank $i$, and by $G_i(P)$  the bipartite graph with bipartition $(P_i, P_{i+1})$, where for $a\in P_i$ and $b\in P_{i+1}$, $a$ is adjacent to $b$ if and only if $a<b$. It is clear that $P_i$ contains at most 2 elements for each $i$.  We make the following observations:

 (1) If, for some $i$, $G_i(P)$ is not connected,  then $G_i(P)$   consists of two disjoint edges. We may assume $P_i=\{a_1,a_2\}$ and $P_{i+1}=\{b_1,b_2\}$ with $a_j<b_j$ for $j=1,2$. In this case, we have $f(a_i)=f(b_i)$ for $i=1,2$ and for $f\in C_P$. Thus, if we identify  $a_j$ with the  $b_j$ for $j=1,2$ to obtain a smaller poset $P'$, then it is evident that $C_P$ is isomorphic to $C_{P'}$, and $K_P$ is isomorphic to $K_{P'}$. In particular, we have $\mathbb{K}[C_P]$ is isomorphic to $\mathbb{K}[C_{P'}]$.   Hereafter, we denote by $\overline{P}$ the poset that is obtained from $P$ by a series of such identifications, where $G_i(\overline{P})$ are connected for all possible $i$. For example, if $P$ is the disjoint union of two chains $C_1$ and $C_2$ of the same length, where every element of $C_1$
 is incomparable with every element of $C_2$, then $\overline{P}$ is an anti-chain containing exactly two elements.

 (2) If there is an index $0<i<r$ such that $G_i(P)$ is complete bipartite, then $P$ is the ordinal sum of the subposets $Q_1:=P_0\cup \cdots\cup P_i$ and $Q_2:=P_{i+1}\cup \cdots\cup P_r$. Furthermore, if $|P_i|=1$, then $\mathbb{K}[C_{Q_1}]$ is isomorphic to $\mathbb{K}[C_{Q_1'}]$, where $Q_1'$ denotes the subposet $Q_1\setminus P_i$, and  similarly, if $|P_{i+1}|=1$, then $\mathbb{K}[C_{Q_2}]$ is isomorphic to $\mathbb{K}[C_{Q_2'}]$, where $Q_2':=Q_2\setminus P_{i+1}$.

The following definition is now natural.
 %The key to studying the chain algebra of pure posets with a width of 2 lies in the following definition.
 \begin{Definition}\em A finite pure poset $P$ is called {\it basic} if it is of width 2  and rank $\geq 1$, and    $G_i(P)$ is a connected  bipartite graph that is not complete for each $i=0,\ldots,r-1$.
 \end{Definition}
 The above discussions establish that, given any pure poset $P$ with width 2, we have a poset $\overline{P}$ such that $G_i(\overline{P})$ is connected for all possible $i$ and   $\mathbb{K}[C_P]$ is isomorphic to $\mathbb{K}[C_{\overline{P}}]$. Furthermore, $\overline{P}$ is the ordinal sum of basic posets and posets of rank $0$ and width 2. Note that a poset of rank $0$ and width 2 is nothing but an anti-chain containing two elements.

 In what follows, whenever $P$ is a basic poset, we always assume that $P=\{X_0,\ldots,X_r, Y_0,\ldots,Y_r\}$ such that $X_0<X_1<\cdots<X_r$ and $Y_0<\cdots<Y_r$ are two maximal chains of $P$.  For $X\in P$, we say $X$ has degree $i$ if $|N_P(X)|=i$, meaning that there are exactly $i$ elements that covers $X$.
 By our assumption, for each $i\in [0,r-1]$,  exactly one of $\{X_i,Y_i\}$  has degree one. Furthermore, if $X_i$ has  degree  one,  then $N_P(X_i)=\{X_{i+1}\}$ and $N_P(Y_i)=\{X_{i+1},Y_{i+1}\}$, and similarly, if $Y_i$ has degree  one then $N_P(Y_i)=\{Y_{i+1}\}$ and $N_P(X_i)=\{X_{i+1},Y_{i+1}\}$. Thus,  to fully describe a basic poset, it is enough to identify all the elements  that have degree one.
We associate every basic poset a sequence of positive integers as follows.

\begin{Definition}\em Let $(c_1,c_2,\ldots,c_n)$ be an integral vector, where $c_1+\cdots+c_n=r$ with $c_i\geq 1$ for all $i=1,\ldots,n$. Set $b_0=0$ and $b_i=c_1+c_2+\cdots+c_i$ for $i=1,\ldots,n$. Let $P$ be a basic poset. By relabelling if necessary, we may assume $X_0$ has degree one. We say that a basic poset $P$ has  type $(c_1,c_2,\ldots,c_n)$   if the degree-one elements of $P$ are as follows: \begin{align*}& X_0,\ldots,X_{c_1-1}, Y_{c_1},\ldots, Y_{c_1+c_2-1}, \ldots, \\& X_{b_{2i}},X_{b_{2i}+1}, \ldots, X_{b_{2i+1}-1}, Y_{b_{2i+1}},Y_{b_{2i+1}+1}, \ldots, Y_{b_{2i+2}-1}, \ldots\\ & W_{b_{n-1}},W_{b_{n-1}+1},\ldots, W_{b_n-1}.  \end{align*} Here, $W=X$ if $n$ is odd, and $W=Y$ if $n$ is even.
\end{Definition}
 Up to isomorphisms, every basic poset is determined  by its type. Specializations of Theorems~\ref{main1} and \ref{can}  to basic posets produces the following result.

\begin{Lemma} \label{cri} Let $P$ be a basic poset of type $(c_1,\ldots,c_n)$ with $c_0$ set to 0.  Consider a function $f: P\rightarrow \mathbb{N}_0$  such that $f(P_i)=f(P_j)$ for all $0\leq i,j\leq r$. Then

$\mathrm{(1)}$ $f$ belongs to $C_P$ if and only if the following  condition $\mathrm{(*)}$ holds: for all $i = 0, \ldots, n$, when $c_1 + \cdots + c_i  \leq k \leq c_1 + \cdots + c_i + c_{i+1} - 1$:

\begin{itemize}
    \item If $i$ is even, then $f(X_k) \leq f(X_{k+1})$;
    \item If $i$ is odd, then $f(X_k) \geq f(X_{k+1})$.
\end{itemize} More intuitively, condition $\mathrm{(*)}$ can also be written in the following form:
$$f(X_0)\leq\cdots \leq f(X_{c_1})\geq f(X_{c_1+1})\geq \cdots \geq f(X_{c_1+c_2})\leq \cdots \geq \cdots f(X_{r-1})\sim f(X_r).$$
Here, $\sim$ is $\leq $ if $n$ is odd and $\sim$ is $\geq $ if $n$ is even.

$\mathrm{(2)}$ $f$ belongs to $K_P$ if and only if $f(X)>0$ for all $X\in P$, and the following  condition $\mathrm{(**)}$ holds: for all $i = 0, \ldots, n$, when $c_1 + \cdots + c_i  \leq k \leq c_1 + \cdots + c_i + c_{i+1} - 1$:

\begin{itemize}
    \item If $i$ is even, then $f(X_k) < f(X_{k+1})$;
    \item If $i$ is odd, then $f(X_k) > f(X_{k+1})$.
\end{itemize} More intuitively, condition $\mathrm{(**)}$ can also be written in the following form:
$$f(X_0)<\cdots < f(X_{c_1})> f(X_{c_1+1})> \cdots > f(X_{c_1+c_2})< \cdots > \ldots f(X_{r-1}) \sim f(X_r).$$
Here, $\sim$ is $< $ if $n$ is odd and $\sim$ is $> $ if $n$ is even.
\end{Lemma}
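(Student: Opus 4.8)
The plan is to specialize the two structural characterizations already in hand—Theorem~\ref{main1} for $C_P$ and Theorem~\ref{can} for $K_P$—to the rigid combinatorics of a basic poset, where each rank level is exactly $P_i=\{X_i,Y_i\}$. The single most useful reduction is to exploit the standing hypothesis $f(P_i)=f(P_j)$: setting the common value $s:=f(P_i)$, we may write $f(Y_i)=s-f(X_i)$ for every $i$, so that every condition on a $Y$-coordinate converts into a condition on an $X$-coordinate. This is what allows the entire statement to be rephrased as a single zigzag relation among $f(X_0),\dots,f(X_r)$.

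For part (1) I would run through condition (b) of Theorem~\ref{main1}, which must be checked only for the non-empty subsets $V$ of each $P_i$ with $0\le i\le r-1$. Since $P_i=\{X_i,Y_i\}$, the candidates are $\{X_i\}$, $\{Y_i\}$, and $P_i$ itself. Taking $V=P_i$ gives $f(P_i)\le f(N_P(P_i))=f(P_{i+1})$, an automatic equality under the standing hypothesis, so it contributes nothing. For the singletons the key fact is that exactly one of $X_i,Y_i$ has degree one at each rank. If $X_i$ has degree one, then $N_P(X_i)=\{X_{i+1}\}$ forces $f(X_i)\le f(X_{i+1})$, while the degree-two element $Y_i$ yields $f(Y_i)\le f(P_{i+1})=s$, which holds automatically because $f(X_i)\ge 0$; symmetrically, if $Y_i$ has degree one, the relation $f(Y_i)\le f(Y_{i+1})$ rewrites through $f(Y_i)=s-f(X_i)$ as $f(X_i)\ge f(X_{i+1})$, and again the degree-two element imposes nothing. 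Reading off from the definition of the type $(c_1,\dots,c_n)$ which of $X_i,Y_i$ is the degree-one element on each block of ranks—$X$'s on the even-indexed blocks $b_i\le k\le b_{i+1}-1$ and $Y$'s on the odd-indexed ones—then reassembles exactly into the inequalities of condition $(*)$, with the terminal sign determined by the parity of $n$.

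Part (2) runs in parallel, starting from Theorem~\ref{can}. Because $P$ is basic, each $G_i(P)$ is connected, so the unique connected component of rank $i$ is all of $P_i$, and condition (1) of Theorem~\ref{can} must be imposed precisely on the two non-empty proper subsets $\{X_i\},\{Y_i\}$ of $P_i$, now with strict inequality. The same degree-one/degree-two dichotomy shows that the degree-one element yields the strict relations of condition $(**)$, whereas the degree-two element yields $f(Y_i)<s$ (respectively $f(X_i)<s$), which is equivalent to strict positivity of the complementary coordinate and is therefore already guaranteed by the requirement $f(x)>0$ for all $x\in P$; conversely, positivity together with $(**)$ recovers every inequality demanded by Theorem~\ref{can}. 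I expect the only genuinely delicate part of the write-up to be the index bookkeeping—verifying that the block decomposition encoded by $b_0=0$, $b_i=c_1+\cdots+c_i$ together with the parity convention matches the directions of the inequalities throughout, and in particular pinning down the terminal relation $f(X_{r-1})\sim f(X_r)$ according to whether $n$ is odd or even.
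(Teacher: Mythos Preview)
Your proposal is correct and matches the paper's approach exactly: the paper states that Lemma~\ref{cri} is obtained by ``specializations of Theorems~\ref{main1} and \ref{can} to basic posets'' and gives no further details, so your write-up is precisely the omitted verification. The only cosmetic point is that the stated index range ``$i=0,\ldots,n$'' in the lemma is effectively $i=0,\ldots,n-1$ (there is no $c_{n+1}$), which you handle correctly when tracking the terminal parity.
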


Henceforth, we shall use $\lambda(P)$ (or simply $\lambda$) to represent the maximum value among $c_i$ for $1 \leq i \leq n$. Additionally, we set $b_0 = 0$, and $b_i = c_1 + \ldots + c_i$ for $i = 1, \ldots, r$. Recall that for a function $f\in C_P$, the degree of $f$, denoted by $\deg(f)$, is defined as $f(P_i)$, where $i$ can be any number within $[0,r]$.
   The following lemma allows us to construct functions in $K_P$ satisfying certain properties conveniently.

\begin{Lemma} \label{construction} Let $P$  be a basic poset of type $(c_1,\ldots,c_n)$. Given $0\leq s<t\leq n$ and a function $f$ from $Q:=\{X_{i}\:\; b_s\leq i\leq b_t\}$ to $\mathbb{N}$ such that for any $j\in \{s,t\}$, $f(X_{b_j})=1$ if $j$ is even and $f(X_{b_j})\geq \lambda+1$ otherwise, and that the monotonicity of $f$ at $Q$ meets the requirements of condition $\mathrm{(**)}$ in Lemma~\ref{cri}. Then $f$ can be extended a function in  $K_P$ of degree $d$, where $d$ is any given number greater than or equal to $\max\{f(X)\:\; X\in Q, \quad \lambda+1\}+1$.
\end{Lemma}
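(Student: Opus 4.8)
The plan is to reduce the extension problem to the single chain $X_0 < X_1 < \cdots < X_r$ and then to fill in the missing values by a canonical alternating interpolation. In a basic poset every rank is $P_i = \{X_i, Y_i\}$, so any degree-$d$ function $g$ in $C_P$ satisfies $g(X_i) + g(Y_i) = g(P_i) = d$ and is therefore completely determined by its values on the $X$-chain through $g(Y_i) = d - g(X_i)$. Thus extending the given $f$ on $Q = \{X_i : b_s \leq i \leq b_t\}$ to an element of $K_P$ of degree $d$ amounts to choosing integers $f(X_i)$ for the two tails $i \in [0,b_s) \cup (b_t, r]$ and then setting $f(Y_i) := d - f(X_i)$ for all $i$. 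By Lemma~\ref{cri}(2), the resulting $f$ lies in $K_P$ exactly when the $X$-chain obeys the strict alternating condition $\mathrm{(**)}$ and $0 < f(X_i) < d$ holds for every $i$; here the upper bound is precisely what forces $f(Y_i) = d - f(X_i) > 0$.

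Next I would extend $f$ off $Q$ by the profile dictated by $\mathrm{(**)}$: the segment $[b_j, b_{j+1}]$ must be strictly increasing when $j$ is even and strictly decreasing when $j$ is odd, so the points $X_{b_j}$ with $j$ even are valleys and those with $j$ odd are peaks. On the tails not covered by $Q$ I would assign every new valley the value $1$ and every new peak the value $\lambda + 1$, while keeping the prescribed boundary values $f(X_{b_s})$ and $f(X_{b_t})$ fixed. Because the increasing/decreasing rule depends only on the parity of the segment index, the profile built on the tails is automatically compatible with the profile already present on $Q$: the hypothesis that $f(X_{b_s})$ equals $1$ for $s$ even and is $\geq \lambda+1$ for $s$ odd (and likewise at $b_t$) guarantees that these endpoints serve as a valley, respectively a peak, consistently with both the tail extension and the internal monotonicity of $f$ on $Q$. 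Hence the completed $X$-sequence satisfies $\mathrm{(**)}$ globally.

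The crux is to realize each monotone segment by a strictly monotone integer run while keeping all values inside $[1, d-1]$. On an interior segment the endpoints are a valley $1$ and a peak $\lambda+1$, requiring a strictly monotone run of $c_{j+1} \leq \lambda$ steps with total change $\lambda$; since the number of steps never exceeds the total change, such a run exists (take unit steps and absorb the slack in the last step). On a segment adjacent to $Q$ one endpoint is a prescribed peak $p = f(X_{b_s}) \geq \lambda+1$ (or $f(X_{b_t})$) and the other is a valley $1$, so the total change $p - 1 \geq \lambda \geq c_{j+1}$ again dominates the step count and the same filling works. Every value produced lies between $1$ and $\max\{\lambda+1,\, f(X_{b_s}),\, f(X_{b_t})\} \leq d-1$, the last inequality being exactly the content of the hypothesis $d \geq \max\{f(X) : X \in Q,\ \lambda+1\} + 1$; in particular each $f(X_i) \in [1, d-1]$, whence $f(Y_i) = d - f(X_i) \in [1, d-1]$ is positive. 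Thus $f(X) > 0$ for all $X \in P$ and $\mathrm{(**)}$ holds, so $f \in K_P$ by Lemma~\ref{cri}(2), while $\deg(f) = f(P_0) = d$ by construction. The genuinely delicate point, which I would treat most carefully, is this feasibility of the strictly monotone integer interpolation: it is precisely where the bound $\lambda = \max_i c_i$ on the segment lengths and the lower bound $d \geq \lambda + 2$ on the degree are needed.
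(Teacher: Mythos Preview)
Your proposal is correct and follows essentially the same approach as the paper: extend $f$ to the $X$-chain by assigning the value $1$ at each even-indexed $X_{b_j}$ (valley) and $\lambda+1$ at each odd-indexed $X_{b_j}$ (peak) outside $Q$, fill each monotone segment by unit steps with the slack absorbed at the end, and then set $f(Y_i)=d-f(X_i)$. The paper's proof is slightly terser---it writes down the explicit sequences $1,2,\ldots$ and $\lambda+1,\lambda,\ldots$ directly rather than arguing feasibility of the interpolation---but the construction and the verification via Lemma~\ref{cri}(2) are identical.
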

\begin{proof} We extend the definition of $f$ in the following way:  For $0\leq i<s$ or $t\leq i\leq n$,
\begin{itemize}
\item If $i$ is even,  we define $f(X_{b_i}) = 1, f(X_{b_i+1})=2, \ldots, f(X_{b_{i+1}-1})=c_i$.
\item If $i$ is odd,  we define $f(X_{b_i}) = \lambda+1$ when $i\neq t$. It is worth noting  that if $i=t$, we do not need to define $f(X_{b_i})$.  Furthermore, we specify the values of $f(X_{b_i+1}), \ldots, f(X_{b_{i+1}-1})$ in such a way that they form a strictly decreasing sequence within the interval  $[2,\lambda]$. As an  example,  such a sequence could be $\lambda,\lambda-1,\ldots, \lambda-c_i+2$.
    \item For all $0 \leq i \leq r$, we define $f(Y_i) = d - f(X_i)$.
\end{itemize}
Note that if either $c_{i+1}=1$ or $i=n$, we do not need to define $f(X_{b_i+1}), \ldots, f(X_{b_{i+1}-1})$.

 By applying Lemma~\ref{cri}, it can be easily verified that $f$ indeed belongs to $K_P$ and has a degree of $d$, thereby completing the proof.
\end{proof}
We now evaluate  the $a$-invariant of the chain algebra associated with a basic poset.
\begin{Lemma} \label{a} Let $P$  be a basic poset of type $(c_1,\ldots,c_n)$. Then $a(\mathbb{K}[C_P])=-\lambda-2$.

\begin{proof}  We first show that $\deg(f)\geq \lambda+2$ for all  $f\in K_P$. Let $f\in K_P$. There exists $1\leq i\leq n$ such that $c_i=\lambda$. We may assume that $i$ is even w.l.o.g. Set $b_0=0$ and $b_j=c_1+\cdots+c_j$ for $j=1,\ldots,n$. Then $$1\leq   f(X_{b_i})<\cdots<f(X_{b_i+c_i}).$$
 This implies $f(X_{b_i+c_i})\geq \lambda+1$ and so $\deg(f)=f(X_{b_i+c_i})+f(Y_{b_i+c_i})\geq \lambda+2$.

By Lemma~\ref{construction}, there is a function in $K_P$, say $g$, such that $g(X_{b_i+j})=1+j$ for $j=0,\ldots,c_i=\lambda$ with degree $\lambda+2$. This completes the proof.
\end{proof}
\end{Lemma}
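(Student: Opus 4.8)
The plan is to compute the $a$-invariant via the identity $a(\mathbb{K}[C_P]) = -\min\{i : (K_P)_i \neq \emptyset\}$ recorded in Subsection~\ref{affine}, so the whole task reduces to determining the minimal possible degree of a function in $K_P$. Concretely, I must establish two things: a lower bound showing every $f \in K_P$ satisfies $\deg(f) \geq \lambda + 2$, and a matching construction exhibiting a single $g \in K_P$ with $\deg(g) = \lambda + 2$. Together these give $\min\{i : (K_P)_i \neq \emptyset\} = \lambda + 2$, hence $a(\mathbb{K}[C_P]) = -\lambda - 2$.

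First I would prove the lower bound. Pick an index $i$ with $c_i = \lambda$; by relabelling (swapping the roles of the two chains, i.e. the $X$- and $Y$-labelling) I may assume $i$ is even. The governing monotonicity from condition $\mathrm{(**)}$ of Lemma~\ref{cri} forces a strictly increasing run of length $c_i = \lambda$ on the block indexed by $b_{i-1} = b_i - c_i$ through $b_i$, so that
\[
1 \leq f(X_{b_i - c_i}) < f(X_{b_i - c_i + 1}) < \cdots < f(X_{b_i}).
\]
Since these are $\lambda + 1$ strictly increasing positive integers, the top value satisfies $f(X_{b_i}) \geq \lambda + 1$. Because $\deg(f) = f(P_{b_i}) = f(X_{b_i}) + f(Y_{b_i})$ and $f(Y_{b_i}) \geq 1$ by condition~(2) of Lemma~\ref{cri} (all values are positive), we conclude $\deg(f) \geq (\lambda+1) + 1 = \lambda + 2$. (I should double-check the index bookkeeping against the even/odd convention in Lemma~\ref{cri}, since whether the run attached to block $i$ is increasing or decreasing depends on parity; the relabelling step is precisely what lets me normalize to the increasing case.)

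For the upper bound I would invoke Lemma~\ref{construction} directly with the chosen even index $i$ and the block $Q = \{X_j : b_{i-1} \leq j \leq b_i\}$, prescribing $f(X_{b_{i-1}+j}) = 1 + j$ for $j = 0, \ldots, c_i = \lambda$. This gives a strictly increasing sequence meeting the $\mathrm{(**)}$ requirements with boundary value $1$ at the even endpoints, and Lemma~\ref{construction} extends it to a genuine element $g \in K_P$ of the minimal admissible degree $\lambda + 2$. I expect the main obstacle to be not any deep argument but rather getting the parity conventions and endpoint conditions of Lemma~\ref{construction} to line up exactly with the normalized increasing block—in particular verifying that the required endpoint values ($1$ at even endpoints, $\geq \lambda+1$ at odd endpoints) are compatible with the constructed run and that the resulting degree is exactly $\lambda+2$ and not larger. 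Once the bookkeeping is confirmed, the two bounds coincide and the formula $a(\mathbb{K}[C_P]) = -\lambda - 2$ follows.
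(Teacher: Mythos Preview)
Your proof is essentially the paper's: bound $\deg(f)\ge\lambda+2$ via the strictly monotone run of length $\lambda$ forced by condition $\mathrm{(**)}$ of Lemma~\ref{cri}, then invoke Lemma~\ref{construction} on that same block to build a witness in $K_P$ of degree exactly $\lambda+2$. On the bookkeeping you flagged: under the conventions of Lemma~\ref{cri} the block $\{X_{b_{i-1}},\dots,X_{b_i}\}$ is strictly increasing precisely when $i$ is \emph{odd} (loop index $i-1$ even), so your prescription $f(X_{b_{i-1}+j})=1+j$ and the endpoint hypotheses of Lemma~\ref{construction} need $i$ odd rather than even---swapping the $X$/$Y$ labels does not alter the type and hence cannot flip this parity, but the $i$-even case is handled symmetrically by the decreasing run, so no harm is done.
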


Based on this result, we can compute the regularity of the chain algebra of all pure posets of width 2. First, since the chain algebra $\mathbb{K}[C_P]$ is standard graded, its Hilbert series of  can be expressed as a rational function $\frac{Q(t)}{(1-t)^d}$, where $d$ is equal to the Krull dimension of $\mathbb{K}[C_P]$. Moreover, because $\mathbb{K}[C_P]$ is Cohen-Macaulay, its regularity  is equal to the degree of $Q(t)$. Consequently, we have $\mathrm{reg}(\mathbb{K}[C_P]) = d + a(\mathbb{K}[C_P])$.
The dimension $d$ can be determined using Theorem~\ref{krull}.  As stated in Subsection~\ref{ordinal}, the $a$-invariant of the chain algebra of the ordinal sum of two posets $P_1$ and $P_2$ is given by $
a(\mathbb{K}[C_{P_1 \oplus P_2}]) = \min\{a(\mathbb{K}[C_{P_1}]), a(\mathbb{K}[C_{P_2}])\}.$

Let $P$ be a pure poset of width $2$. Then there exists a poset $\overline{P}$ such that $\overline{P}$ has the same chain algebra as $P$, and $\overline{P}$ can be expressed as the ordinal sum of basic posets and posets that have rank $0$ and width $2$. For a poset with rank $0$ and width $2$, its chain algebra is a polynomial ring in two variables. In particular, it is Gorenstein with an $a$-invariant of $-2$. Thus, the $a$-invariant and hence the regularity of $\mathbb{K}[C_P]$ can be computed using these observations and Lemma~\ref{a}.

Finally, we will characterize when $\mathbb{K}[C_P]$ is Gorenstein or nearly Gorenstein whenever $P$ is a pure poset of width 2. Note that for any $f\in C_P$, $\deg(f)$ is equal to $f(P_i)$ for any $i\in [0,r]$.

\begin{Lemma} \label{level} Let $P$ be a basic poset of type   $(c_1,\ldots,c_n)$.  Then the following statements are equivalent:
\begin{enumerate}
  \item $\mathbb{K}[C_P]$ is pseudo-Gorenstein.
  \item There exists a unique function belonging to $K_P$ with degree $\lambda+2$.
  \item It holds that $c_i\in \{1,\lambda\}$ for all $i=1,\ldots, n$, and  that if $c_i=1$ then $1< i< n$ and $c_{i-1}=c_{i+1}=\lambda$.
\end{enumerate}
\end{Lemma}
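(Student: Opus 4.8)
The plan is to prove the cycle of equivalences $(1)\Leftrightarrow(2)\Leftrightarrow(3)$ by first converting the pseudo-Gorenstein condition into a counting problem for $K_P$, and then solving that counting problem combinatorially via Lemma~\ref{cri}.

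\emph{Step 1: $(1)\Leftrightarrow(2)$.} By Lemma~\ref{can0} the canonical module of $R:=\mathbb{K}[C_P]$ is $\langle K_P\rangle_{\mathbb{K}}$, whose degree-$i$ piece is spanned by the distinct, hence linearly independent, monomials $U_f$ with $f\in K_P$ and $\deg f=i$; thus $\dim_{\mathbb{K}}(\omega_R)_i$ equals the number of $f\in K_P$ of degree $i$. By Lemma~\ref{a} we have $a(R)=-\lambda-2$, so by the $a$-invariant formula recorded in Subsection~\ref{affine} the least degree occurring in $K_P$ is $\lambda+2$. Hence $R$ is pseudo-Gorenstein exactly when $\dim_{\mathbb{K}}(\omega_R)_{\lambda+2}=1$, i.e. exactly when there is a unique $f\in K_P$ of degree $\lambda+2$; this is statement $(2)$.

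\emph{Step 2: encoding degree-$(\lambda+2)$ elements of $K_P$.} For such an $f$, write $g_i:=f(X_i)$. Since $f(P_i)=\deg f=\lambda+2$ we get $f(Y_i)=\lambda+2-g_i$, and the positivity $f(X)>0$ from Lemma~\ref{cri}(2) forces $1\le g_i\le\lambda+1$ for every $i$. By the monotonicity in Lemma~\ref{cri}(2), $f\in K_P$ is then equivalent to the strict zigzag condition on $(g_0,\dots,g_r)$: it increases strictly across each even block and decreases strictly across each odd block, the blocks being the intervals $[b_i,b_{i+1}]$ cut out by the partial sums $b_0=0<b_1<\dots<b_n=r$ of $(c_1,\dots,c_n)$. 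So $(2)$ is equivalent to: there is a unique integer sequence $g$ with values in $[1,\lambda+1]$ obeying this zigzag. I call the $b_j$ \emph{turning points}; they alternate between valleys (local minima) and peaks (local maxima), with $b_0$ a valley and $b_n$ a peak iff $n$ is odd.

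\emph{Step 3: the count.} The family of valid sequences is closed under coordinatewise $\min$ and $\max$ (strictness on each run is preserved, as is the box $[1,\lambda+1]$), hence is a finite lattice; uniqueness holds iff its bottom element $g^{\min}$ and top element $g^{\max}$ coincide, that is, iff every coordinate is forced. An internal peak at $b_j$ ranges over $[\,1+\max(c_j,c_{j+1}),\ \lambda+1\,]$, so it is forced iff one of its two adjacent blocks has size $\lambda$ (its value then being $\lambda+1$); dually an internal valley ranges over $[\,1,\ \lambda+1-\max(c_j,c_{j+1})\,]$, so is forced iff an adjacent block has size $\lambda$ (value $1$), while $b_0,b_n$ have the single adjacent block of size $c_1$, $c_n$. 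Thus all turning points are forced iff $c_1=c_n=\lambda$ and $\max(c_j,c_{j+1})=\lambda$ for all internal $j$, i.e. no two consecutive blocks are both smaller than $\lambda$. Granting this, every run joins a valley ($=1$) to a peak ($=\lambda+1$), hence has endpoint gap $\lambda$, and the number of strictly monotone fillings of a run of size $c$ with gap $\lambda$ is $\binom{\lambda-1}{c-1}$, which equals $1$ precisely when $c=1$ or $c=\lambda$. Therefore $g^{\min}=g^{\max}$ iff $c_1=c_n=\lambda$, every $c_j\in\{1,\lambda\}$, and no two consecutive blocks equal $1$; since the $c_j$ take only the values $1$ and $\lambda$, this says exactly that each $1$-block is interior with both neighbours equal to $\lambda$, which is condition $(3)$.

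\emph{Main obstacle and edge cases.} The delicate part is Step 3: one must confirm that the coordinatewise extrema $g^{\min},g^{\max}$ are genuinely attained by valid sequences (the lattice property, which I would check directly), handle the two boundary turning points separately, and invoke the identity $\binom{\lambda-1}{c-1}=1\Leftrightarrow c\in\{1,\lambda\}$ — noting in particular that a size-$1$ run imposes no interior constraint even though its endpoints differ by $\lambda$. I would also record the degenerate case $\lambda=1$: then every block has size $1=\lambda$, the sequence alternates $1,2,1,2,\dots$ and is forced, so $R$ is pseudo-Gorenstein and condition $(3)$ holds vacuously once the clause ``$c_i=1$'' is read as ``$c_i<\lambda$''.
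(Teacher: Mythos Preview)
Your proof is correct and reaches the same conclusion, but the route differs from the paper's. For $(1)\Leftrightarrow(2)$ both arguments are the same. For $(2)\Leftrightarrow(3)$ the paper proceeds by explicit case analysis: for $(3)\Rightarrow(2)$ it walks through the blocks and shows the values $f(X_i)$ are forced one by one; for $(2)\Rightarrow(3)$ it negates each clause of $(3)$ separately (some $1<c_i<\lambda$; $c_1=1$; two consecutive $c_i=c_{i+1}=1$) and in every case invokes Lemma~\ref{construction} to manufacture two distinct degree-$(\lambda+2)$ elements of $K_P$. Your argument instead organises the degree-$(\lambda+2)$ elements of $K_P$ as a finite lattice under coordinatewise $\min/\max$, reduces uniqueness to $g^{\min}=g^{\max}$, determines the exact range of each turning-point value, and finishes with the count $\binom{\lambda-1}{c-1}$ for the interior of a run.

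What each approach buys: the paper's method is elementary and self-contained once Lemma~\ref{construction} is in hand, but it requires three separate constructions and gives no information beyond non-uniqueness. Your lattice argument is more conceptual, handles both implications at once, and in fact yields the exact cardinality $\prod_j\binom{\lambda-1}{c_j-1}$ (when the turning points are forced); the price is that you must justify achievability of the extremal turning-point values, which ultimately needs a construction of the same flavour as Lemma~\ref{construction} (the ``all valleys $=1$, all peaks $=\lambda+1$'' sequence for the top, and a localised version for the bottom). You flag this correctly as the main obstacle. Your observation about $\lambda=1$ is also well taken: as literally stated, condition (3) fails for $i=1$ when $\lambda=1$, whereas the algebra is pseudo-Gorenstein; the intended reading is indeed that the hypothesis ``$c_i=1$'' should be ``$c_i\neq\lambda$''.
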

\begin{proof}  $(1)\Leftrightarrow (2)$  By Lemma~\ref{a}, we have $a(\mathbb{K}[C_P])=-\lambda-2$. Due to this fact, the equivalence of (1) and (2) follows by the definition of pseudo-Gorensteinness.

$(3)\Rightarrow (2)$ Let $f$ be an arbitrary function in $K_P$ with $\deg(f) = \lambda + 2$. To establish the result, it suffices to prove that the values of $f(X)$ for all $X\in P$ are uniquely determined by the sequence $c_1,\ldots,c_n$. We proceed through the following  steps:

\textbf{Step 1}: Given that $c_1 = \lambda$, we invoke Lemma~\ref{cri} to conclude:
\[
1 \leq f(X_0) < f(X_1) < \cdots < f(X_{\lambda}) \leq \lambda + 1.
\]
Consequently, it must be that $f(X_j) = j + 1$ for $j = 0, \ldots, \lambda$.

\textbf{Step 2}:
Consider now the case where $c_{i+1} = \lambda$ for some $i$. Utilizing a similar argument as in Step 1, we deduce:
\begin{itemize}
\item If $i$ is even, then $f(X_{b_i + j}) = j + 1$ for $j = 0, \ldots, \lambda$.
\item If $i$ is odd, then $f(X_{b_i + j}) = \lambda + 1 - j$ for $j = 0, \ldots, \lambda$.
\end{itemize}
In particular, for each $i$ such that $c_{i+1} = \lambda$, the values of $f$ on the subset $$\{X_{b_i}, X_{b_i + 1}, \ldots, X_{b_i + c_{i+1}-1}, X_{b_{i+1}}\}$$ of $P$ are uniquely specified.

\textbf{Step 3}:
For indices $i$ where $c_{i+1} = 1$, note that $c_i = c_{i+2} = \lambda$. By the results of Step 2, the values of $f$ at $X_{b_i}$ and $X_{b_{i+1}}$ are already uniquely determined. Therefore, the values of $f(X_i)$ for $i=0,\ldots,r$ are uniquely determined.  Since $f(X_i) + f(Y_i) = \lambda + 2$,  it follows that the value of $f(Y_i)$ is also uniquely determined. In conclusion, the values of $f(X)$ for all $X\in P$ are uniquely specified, completing the proof.

$(2)\Rightarrow (3)$ Suppose on the contrary that there is $1\leq i\leq n$ such that $1<c_i<\lambda$. If $i$ is odd, we define functions $f$ and $g$ on the subset $\{X_{b_{i-1}}, X_{b_{i-1}+1} \ldots, X_{b_{i-1}+c_i-1}, X_{b_i}\}$ of $P$ by $f(X_{b_{i+1}+j})=j+1$ for $j=0,1,\ldots,c_i-1$ and $f(b_i)=\lambda+1$, and $g(X_{b_{i+1}})=1$, $g(X_{b_{i+1}+j})=j+2$ for $j=1,\ldots,c_i-1$ and $g(b_i)=\lambda+1$. By Lemma~\ref{construction}, both $f$ and $g$ could be extended to functions belonging to  $K_P$ with degree $\lambda+2$. This is a contradiction, implying that if $i$ is odd,  $c_i$ must be either 1 or $\lambda$. Analogously, we have $c_i$ must be either 1 or $\lambda$ if $i$ is even. Hence, we conclude that $c_i\in \{1,\lambda\}$ for all $i=1,\ldots,n$.

 Suppose that $c_1=1$.  Then, in view of Lemma~\ref{construction}, the functions $f$ and $g$ on the subset$\{X_0,X_1\}$ defined by $f(X_0)=1, f(X_1)=\lambda+1$ and $g(X_0)=2, g(X_1)=\lambda+1$ can be extended two distinct functions in $K_P$ with a degree of $\lambda+2$. This is a contradiction, thereby proving that $c_1=\lambda$. Similarly, we can show that $c_n=1$.

It remains  to show that there is no $2\leq i\leq n-2$ such that $c_i=c_{i+1}=1$. Suppose on the contrary  that $c_i=c_{i+1}=1$ for some $2\leq i\leq n-2$. If $i$ is odd, according to Lemma~\ref{construction}, there are functions $f$ and $g$ of degree $\lambda+2$ such that $f(X_{b_{i-1}})=1, f(X_{b_i})=2$ and $f(X_{b_{i+1}})=1$, and that $g(X_{b_{i-1}})=1, g(X_{b_i})=\lambda+1$ and $g(X_{b_{i+1}})=1$. This is impossible. Similarly, the case when  $i$ is even is also impossible.
\end{proof}

\begin{Proposition} \label{G2} Let $P$  be a basic poset of type $(c_1,\ldots,c_n)$. Then  the following statements are equivalent:

       $\mathrm{(1)}$ the chain algebra $\mathbb{K}[C_P]$ is nearly Gorenstein.

       $\mathrm{(2)}$  the chain algebra $\mathbb{K}[C_P]$ is Gorenstein.

       $\mathrm{(3)}$ it holds that $c_1=\cdots=c_n=\lambda.$

  \end{Proposition}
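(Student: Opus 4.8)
The plan is to reduce everything to the equivalence of (2) and (3), since the implication $(2)\Rightarrow(1)$ is the standard fact that a Gorenstein ring is nearly Gorenstein, and $(1)\Rightarrow(2)$ (indeed $(1)\Leftrightarrow(2)$) comes essentially for free. A basic poset $P$ has $G_i(P)$ connected and not complete for every $i$, so by Theorem~\ref{ind} its chain polytope $D_P$ is indecomposable; moreover $D_P$ has the integer decomposition property and $\mathbb{K}[C_P]=A(D_P)$ by Proposition~\ref{IDP}. Hence \cite[Theorem 32]{HKMM} applies to $D_P$ and yields that $A(D_P)=\mathbb{K}[C_P]$ is Gorenstein if and only if it is nearly Gorenstein, which is precisely $(1)\Leftrightarrow(2)$. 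It remains to prove $(2)\Leftrightarrow(3)$. Throughout I use that, for the normal Cohen--Macaulay ring $\mathbb{K}[C_P]$, Gorensteinness is equivalent to the canonical module $\langle K_P\rangle_{\mathbb{K}}$ being cyclic, i.e. to $K_P$ being a principal semigroup ideal $K_P=w+C_P$; and by Lemma~\ref{a} such a generator $w$ must have degree $\lambda+2$.

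For $(3)\Rightarrow(2)$, assume all $c_i=\lambda$ and let $w$ be the degree-$(\lambda+2)$ function supplied by Lemma~\ref{level} and Lemma~\ref{construction} whose $X$-values zigzag with slope $\pm1$ between the valley value $1$ and the peak value $\lambda+1$ on successive blocks, with $w(Y_i)=\lambda+2-w(X_i)$; it lies in $K_P$ by Lemma~\ref{cri}(2). I will show $K_P=w+C_P$. The inclusion $w+C_P\subseteq K_P$ is automatic since $K_P$ is an ideal. For the reverse, take $f\in K_P$ and set $g:=f-w$; I check $g\in C_P$ via Lemma~\ref{cri}(1). The degree condition holds because $g(X_i)+g(Y_i)=\deg f-(\lambda+2)$ is constant. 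Condition $(*)$ follows from a slope comparison: on an increasing block $w$ rises by exactly $1$ at each step while $f$ rises by at least $1$ (strict integer step from $(**)$), so $g$ is nondecreasing there, and symmetrically $g$ is nonincreasing on a decreasing block. Finally $g\ge0$: the nearest valley of $w$ on the relevant chain has $w$-value $1$, and since $f$ moves by at least $1$ per step in the same direction as $w$ while $w$ moves by exactly $1$, one gets $f\ge w$ pointwise on both the $X$-chain and the $Y$-chain. Thus $g\in C_P$, proving $K_P=w+C_P$ and hence Gorensteinness.

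For $(2)\Rightarrow(3)$, observe first that a Gorenstein ring is pseudo-Gorenstein, since its cyclic canonical module has one-dimensional bottom-degree component. By Lemma~\ref{level} this forces $c_i\in\{1,\lambda\}$ for all $i$, with every occurrence of $c_i=1$ satisfying $1<i<n$ and $c_{i-1}=c_{i+1}=\lambda$. I rule out the value $1$. Suppose $c_{i_0}=1$; the two adjacent $\lambda$-blocks pin the unique degree-$(\lambda+2)$ generator $w$ so that on the single edge of block $i_0$ its $X$-values jump between $1$ and $\lambda+1$, i.e. $w$ has slope $\pm\lambda$ there. Using Lemma~\ref{construction} I build $f\in K_P$ whose $X$-values across that same edge differ by only $1$, which is allowed since $(**)$ only demands a strict step. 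Then $g=f-w$ has on that edge a step of sign opposite to what $(*)$ requires, of magnitude $\lambda-1>0$, so $g\notin C_P$ and $f\notin w+C_P$. This contradicts the principality $K_P=w+C_P$ forced by Gorensteinness. Hence no $c_i$ equals $1$, so $c_1=\cdots=c_n=\lambda$.

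The main obstacle is the constructive core of $(2)\Leftrightarrow(3)$: in $(3)\Rightarrow(2)$, verifying that $f-w$ is genuinely in $C_P$, where the delicate point is the pointwise nonnegativity on both chains, resting on the fact that $w$ attains its minimum value $1$ at every valley; and in $(2)\Rightarrow(3)$, producing via Lemma~\ref{construction} a function in $K_P$ with a slope-$1$ step across a size-$1$ block and confirming that it destroys the monotonicity required of $f-w$. By contrast, the reduction $(1)\Leftrightarrow(2)$ is painless precisely because basic posets are exactly the indecomposable case governed by \cite[Theorem 32]{HKMM}.
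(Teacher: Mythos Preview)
Your proof is correct and follows essentially the same route as the paper's. The equivalence $(1)\Leftrightarrow(2)$ is obtained in both via Theorem~\ref{ind}, Proposition~\ref{IDP}, and \cite[Theorem 32]{HKMM}; for $(3)\Rightarrow(2)$ both arguments exhibit the unique zigzag generator $w$ of degree $\lambda+2$ and verify $K_P=w+C_P$ by the slope-comparison and valley-minimum argument you describe; and for $(2)\Rightarrow(3)$ both first invoke Lemma~\ref{level} and then construct (via Lemma~\ref{construction}) an element of $K_P$ whose step across a size-$1$ block has slope $\pm1$, forcing a monotonicity violation of size $\lambda-1$ in the difference with $w$---the paper writes this element out explicitly with degree $2\lambda+1$, which is exactly the function your appeal to Lemma~\ref{construction} produces.
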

\begin{proof}

$(1)\Leftrightarrow (2)$  By Theorem~\ref{ind}, the chain polytope $D_P$ of $P$ is indecomposable, and by Proposition~\ref{IDP}, $\mathbb{K}[C_P]$ coincides with the Ehrhart ring of $D_P$. Now, the result follows from \cite[Theorem 32]{HKMM}.

(2) $\Rightarrow$ (3)  Suppose on the contrary that $c_1=\cdots=c_n=\lambda$ does not hold. By Lemma~\ref{level}, there is $1<k<n$ such that $c_k=1$ and $c_{k-1}=c_{k+1}=\lambda\geq 2$. We may assume $k$ is even without loss of generality. By Lemma~\ref{construction}, there is at least one function $\gamma \in K_P$ of degree $2\lambda+1$ satisfying: $$\gamma(X_{b_{k-2}})=1, \gamma(X_{b_{k-2}+1})=2, \ldots, \gamma(X_{b_{k-1}})=\lambda+1,\gamma(X_{b_{k}})=\lambda,$$  and $$\gamma(X_{b_k+1})=\lambda+1,\gamma(X_{b_k+1})=\lambda+2,\ldots,\gamma(X_{b_{k+1}})=2\lambda.$$
On the other hand, since $\mathbb{K}[C_P]$ is Gorenstein, there is an unique function in $K_P$, say $f$,   with degree $\lambda+2$ such that $K_P=f+C_P$. Note that $f(X_{b_{k-1}})=\lambda+1, f(X_{b_{k}})=1$, it follows  that $(\gamma-f)(X_{b_{k-1}})=0$ and  $(\gamma-f)(X_{b_{k}})=\lambda-1$. In view of Lemma~\ref{cri}, we have $\gamma-f$ does not belong to $C_P$ , a contradiction.

(3) $\Rightarrow$ (2) Let $f$ be  the unique function in $K_P$ with $\deg(f)=\lambda+2$. Then the sequence $f(X_0), f(X_1),\ldots, f(X_r)$ is as follows:
$$1,2,\ldots, \lambda,\lambda+1, \lambda,\lambda-1,\ldots, 1,2\ldots.$$
 From this sequence, we observe that $f(X_i)-f(X_{i+1})\in \{1,-1\}$ for all $i=0,\ldots,r-1$. To prove $\mathbb{K}[C_P]$ is Gorenstein, it suffices to show that for any $g\in K_P$, the function $g-f$ belongs to $C_P$. Take an arbitrary function $g$ from $K_P$. According to Lemma~\ref{cri},  we have for all $i=0,\ldots, r-1$, $f(X_i)$ and $g(X_i)$ have the same ordering. That is, $f(X_i)>f(X_{i+1})$ if and only if $g(X_i)>g(X_{i+1})$, and vice versa. Since $g(X_i)>0$ for all $i=0,\ldots,r$, it follows that $g(X_i)\geq f(X_i)$ for all $i$,   and so $h:=g-f$  is  a function from $P$ to $\mathbb{N}_0$.

Now, if $g(X_{i+1})>g(X_i)$, then  $g(X_{i+1})-g(X_i)\geq f(X_{i+1})-f(X_i)=1$, and so $h(X_{i+1})\geq h(X_i)$. Similarly,  if $g(X_{i+1})<g(X_i)$, then $g(X_{i})-g(X_{i+1})\geq f(X_{i})-f(X_{i+1})=1$, and so $h(X_{i+1})\leq h(X_i)$. Therefore, $h$ and $g$ share the same monotonicity, and so $h$ belongs to $C_P$ by Lemma~\ref{cri}, as desired.
\end{proof}

 Recalling  \cite[Theorem 2.7]{HMP}, we have if $R$ is the Segre product $R_1\#R_2\#\cdots\#R_n$ of Gorenstein standard graded $\mathbb{K}$-algebras $R_i$ with $a(R_i)=-a_i<0$ for $i=1,\ldots,n$, then $R$ is Gorenstein if and only if $a_1=\cdots=a_n$. Furthermore, $R$ is nearly Gorenstein  if and only if $|a_i-a_j|\leq 1$ for all $1\leq i,j\leq n$.

\begin{Example} \label{6.8}\em Let $P$ be a pure poset of rank $r$ such that $G_i(P)$ is a complete bipartite graph for all $i=0,\ldots,r-1$. Then $P$ is the ordinal sum of the subposets $P_0, P_1,\ldots,$ and  $P_r$.  Since $P_i$ is an anti-chain for each $i=0,\ldots,r$, $\mathbb{K}[C_{P_i}]$ is a polynomial ring with $|P_i|$ variables.  In particular, $\mathbb{K}[C_{P_i}]$ is  Gorenstein  with $a(\mathbb{K}[C_{P_i}])=-|P_i|$. Therefore, according to \cite[Theorem 2.7]{HMP}, we have $C_P$ is Gorenstein if and only if $|P_0|=\cdots=|P_r|$. Additionally,  $C_P$ is nearly  Gorenstein if and only if there is an integer $d\geq 1$ such that $|P_i|$ is equal to either $d$ or $d+1$ for each $i=0,\ldots,r$.
\end{Example}We are now ready to present the main result of the section.

\begin{Theorem} Let $P$ be a finite pure poset with  width  2. Denote by $\overline{P}$ the poset derived from $P$ by eliminating the disconnected bipartite graphs $G_i(P)$. Suppose that $\overline{P}$ can be expressed as the ordinal sum of basic posets $P_1, \ldots, P_s$ and two-element anti-chains $Q_1, \ldots, Q_t$. Furthermore, assume that  $P_i$ has type $(c_{i,1}, \ldots, c_{i,n_i})$ for $i = 1, \ldots, s$.

 $\mathrm{(1)}$  If $s=0$, then $\mathbb{K}[C_P]$ is always Gorenstein.

$\mathrm{(2)}$  If $t>0$ and $s>0$, then $\mathbb{K}[C_P]$  is not Gorenstein. Moreover, $\mathbb{K}[P]$ is nearly Gorenstein if and only if $c_{i,j}=1$ for all $i=1,\ldots,s, j=1,\ldots,n_i$.

$\mathrm{(3)}$ If $t=0$, then the following statements hold:

$\mathrm{(a)}$ $\mathbb{K}[C_P]$ is Gorenstein if  and only if there is a positive integer  $\lambda$ such that $c_{i,j}=\lambda$ for all $i=1,\ldots,s, j=1,\ldots,n_i$.

$\mathrm{(b)}$ $\mathbb{K}[C_P]$ is nearly  Gorenstein  if  and only if there is a positive integer  $\lambda$ and a subset $A\subseteq [1,s]$ such that $c_{i,j}=\lambda$ for $i\in A$ and $j=1,\ldots,n_i$, while $c_{i,j}=\lambda+1$ for $i\in [1,s]\setminus A$ and $j=1,\ldots,n_i$.
\end{Theorem}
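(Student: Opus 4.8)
The plan is to reduce the whole statement to the Segre-product decomposition of $\mathbb{K}[C_{\overline P}]$ and then separate the four ``if'' directions, which follow from the cited criterion, from the four ``only if'' directions, which require a direct analysis of the canonical ideal $K_{\overline P}$.

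First I would set up the structure. By the discussion preceding the theorem $\mathbb{K}[C_P]\cong\mathbb{K}[C_{\overline P}]$, and from $\overline P=P_1\oplus\cdots\oplus P_s\oplus Q_1\oplus\cdots\oplus Q_t$ together with Subsection~\ref{ordinal},
\[
\mathbb{K}[C_{\overline P}]\cong\mathbb{K}[C_{P_1}]\#\cdots\#\mathbb{K}[C_{P_s}]\#\mathbb{K}[C_{Q_1}]\#\cdots\#\mathbb{K}[C_{Q_t}].
\]
Set $\lambda_i:=\max_j c_{i,j}$; then $a(\mathbb{K}[C_{P_i}])=-(\lambda_i+2)$ by Lemma~\ref{a}, while each $\mathbb{K}[C_{Q_j}]$ is a polynomial ring in two variables, Gorenstein with $a=-2$. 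The key combinatorial input I would establish is a fibre-product description of the canonical ideal: since $C_{\overline P}$ is the fibre product of $C_{P_1},\dots,C_{Q_t}$ over the common degree and since $K_S$ is the set of lattice points in the relative interior of the rational cone $\mathbb{R}_{\geq0}S$, taking relative interiors commutes with this fibre product. Hence $f\in K_{\overline P}$ iff each restriction lies in the corresponding $K_{P_i}$ or $K_{Q_j}$ and all restrictions share a single common degree. In particular the least degree occurring in $K_{\overline P}$ is $d:=\max\{\lambda_1+2,\dots,\lambda_s+2,2\}$, which recovers $a(\mathbb{K}[C_{\overline P}])=-d$.

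The four ``if'' directions are then uniform. When the stated numerical condition holds, every factor has constant type (for the basic posets) or is a polynomial ring, so every factor is Gorenstein by Proposition~\ref{G2}; applying \cite[Theorem 2.7]{HMP} to these Gorenstein factors, their $a$-invariants are all equal in cases (1) and (3)(a) --- giving Gorensteinness --- and differ by at most one in cases (2) and (3)(b) --- giving near-Gorensteinness. For the Gorenstein ``only if'' directions I would first use that Gorenstein implies pseudo-Gorenstein. By the fibre-product description, the number of minimal-degree elements of $K_{\overline P}$ equals $\prod_i N_i\cdot\prod_j M_j$, where $N_i=\#\{g\in K_{P_i}:\deg g=d\}$ and $M_j=\#\{g\in K_{Q_j}:\deg g=d\}=d-1$; pseudo-Gorensteinness forces this product to be $1$. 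If $s>0$ and $t>0$ then $d\geq3$, so $M_j\geq2$ and the algebra fails to be pseudo-Gorenstein --- this is the ``not Gorenstein'' assertion of (2). When $t=0$ the count forces every $N_i=1$; using Lemma~\ref{construction} one checks $N_i\geq2$ as soon as $\lambda_i<d-2$ (adjoin two distinct maximal-chain contributions of the right degree to a minimal element of $K_{P_i}$), so all $\lambda_i$ equal a common $\lambda$ and each $P_i$ is pseudo-Gorenstein, hence of type in $\{1,\lambda\}$ with the isolation pattern of Lemma~\ref{level}. To finish (3)(a) I would exclude isolated $1$'s: an isolated $c_{i_0,k}=1$ produces, via the construction in the proof of Proposition~\ref{G2}, an element $\gamma\in K_{P_{i_0}}$ with $\gamma-(f_0|_{P_{i_0}})\notin C_{P_{i_0}}$ for the unique minimal element $f_0$, and extending $\gamma$ through the fibre product contradicts $K_{\overline P}=f_0+C_{\overline P}$. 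Thus all $c_{i,j}=\lambda$.

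The genuinely hard part is the near-Gorenstein ``only if'' in (2) and (3)(b), where no counting criterion is available. My plan is to prove the trace-level analogue of the fibre-product description: computing the dual $\omega^{-1}=\{b:\,b+K_{\overline P}\subseteq C_{\overline P}\}$ and the trace $\mathrm{tr}(\omega)=K_{\overline P}+\omega^{-1}$, I expect $\mathrm{tr}(\omega_{\mathbb{K}[C_{\overline P}]})$ to be the fibre product of the factor traces over the degree. Granting this, a degree-one generator of $C_{\overline P}$ (a maximal chain of $\overline P$) lies in $\mathrm{tr}(\omega)$ iff each of its restrictions lies in the corresponding factor trace, so $\mathbb{K}[C_{\overline P}]$ is nearly Gorenstein iff every factor is; Proposition~\ref{G2} then promotes near-Gorensteinness of each basic factor to Gorensteinness, and \cite[Theorem 2.7]{HMP} converts $|a_i-a_j|\leq1$ into the stated conditions on the $c_{i,j}$. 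The main obstacle is precisely this trace computation: the degree-matching constraint in the fibre product discards exactly the low-degree test elements used to define $\omega^{-1}$ factorwise, so I must verify that these discarded constraints are redundant --- which should hold because they live in degrees below $d$, where the defining inequalities of the cone are already forced --- in order to identify $\mathrm{tr}(\omega)$ with the fibre product of factor traces and not with a strictly larger ideal.
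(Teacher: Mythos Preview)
Your route and the paper's diverge sharply in length. The paper's proof is essentially one sentence: observe that each $\mathbb{K}[C_{Q_j}]$ is Gorenstein with $a$-invariant $-2$, then combine Proposition~\ref{G2} with \cite[Theorem 2.7]{HMP}. All four parts of the theorem --- both directions in each --- are meant to fall out of that combination directly.

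Your Segre-product setup and the ``if'' directions match the paper exactly. Where you diverge is in the ``only if'' directions: rather than citing \cite{HMP}, you develop an independent argument --- a pseudo-Gorenstein count for the Gorenstein case, and a direct trace/fibre-product analysis for the nearly Gorenstein case. The counting argument is sound; your fibre-product description of $K_{\overline P}$ is correct, because at the complete-bipartite interfaces between summands the strictness constraints of Theorem~\ref{can} are automatic once $f(x)>0$ everywhere. The obstacle you flag for the trace computation is genuine: the degree-matching constraint in the Segre product really does discard low-degree test elements of each $K_{P_i}$, and showing those discarded constraints are redundant amounts to re-deriving the relevant portion of \cite[Theorem 2.7]{HMP} from scratch. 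The paper simply cites the result and moves on.

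One observation worth making explicit: under the literal reading of \cite[Theorem 2.7]{HMP} as quoted here (Gorenstein factors only), the paper's ``only if'' needs the additional fact that a (nearly) Gorenstein Segre product forces each factor to be (nearly) Gorenstein --- so that Proposition~\ref{G2} can upgrade each basic factor to Gorenstein and the cited theorem then applies. The paper evidently regards this as contained in the cited source. If you want a fully self-contained proof, completing your trace computation is the right thing to do, but it is not what the paper does.
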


\begin{proof} Note that for each $i$, $\mathbb{K}[C_{Q_i}]$ is a Gorenstein algebra with $a$-invariant  $-2$.  Therefore, these conclusions   are drawn by combining  Propositions~\ref{G2} with \cite[Theorem 2.7]{HMP}.
\end{proof}

We remark that conclusion (1) of the above theorem is actually a special case of Example~\ref{6.8}.

\vspace{3mm}

{\bf \noindent Acknowledgment:} This project is supported by NSFC (No. 11971338). We would like to express our sincere gratitude to Dr. Sora Miyashita, whose  comments on the initial  version of this paper brought us to the reference \cite{HKMM}, resulting in   the introduction and the exploration of the chain polytope in this  revised version.   Additionally, we sincerely thank the referee for his/her meticulous review and constructive suggestions, which have significantly enhanced the quality of this paper.

\vspace{3mm}
{\bf \noindent Conflict of interest:} The authors have no Conflict of interest directly relevant to the content of this article.

{\bf \noindent Data availability statement:} Data sharing not applicable to this article as no data sets were generated or analysed during the current study.

\end{document}